\patchcmd{\ttlh@hang}{\parindent\z@}{\parindent\z@\leavevmode}{}{}
\patchcmd{\ttlh@hang}{\noindent}{}{}{}
\newcommand\numberthis{\addtocounter{equation}{1}\tag{\theequation}}
\newtheorem{theorem}{Theorem}[section]
\newtheorem{lemma}[theorem]{Lemma}
\newtheorem{proposition}[theorem]{Proposition}
\theoremstyle{definition}
\newtheorem{example}[theorem]{Example}
\newtheorem{question}[theorem]{Question}
\theoremstyle{remark}
\numberwithin{equation}{section}
\def\XXint#1#2#3{{\setbox0=\hbox{$#1{#2#3}{\int}$ }
\vcenter{\hbox{$#2#3$ }}\kern-.6\wd}}
\newcommand{\Z}{\mathbb{Z}}
\newcommand{\N}{\mathbb{N}}
\newcommand{\R}{\mathbb{R}}
\newcommand{\C}{\mathbb{C}}
\DeclareMathOperator{\spn}{span}
\DeclareMathOperator{\supp}{supp}
\DeclareMathOperator{\interior}{int}
\DeclareMathOperator{\tr}{tr}
\newcommand{\FS}{\mathbb{C} \Gamma}
\newcommand{\Hpi}{\mathcal{H}_{\pi}}
\def\pker#1{P_{#1}}
\title[Linear independence of coherent systems associated to discrete subgroups]{Linear independence of   coherent systems \\ associated to discrete subgroups}
\subjclass[2020]{42C40}
\keywords{Coherent system, Linear independence, Nilpotent Lie group, Zero divisor}
\author{Ulrik Enstad}
\address{Department of Mathematics,
University of Oslo,
Moltke Moes vei 35,
0851 Oslo.}
\email{ubenstad@math.uio.no}
\author{Jordy Timo van Velthoven}
\address{Faculty of Mathematics,
University of Vienna, 
Oskar-Morgenstern-Platz 1,
1090 Vienna, Austria}
\email{jordy-timo.van-velthoven@univie.ac.at}
\subjclass[2020]{22D25, 22E27, 42C30, 42C40}
\begin{document}

\maketitle

\begin{abstract}
This note considers the finite linear independence of coherent systems associated to discrete subgroups. We show by simple arguments that such coherent systems of amenable groups are linearly independent whenever the associated twisted group ring does not contain any nontrivial zero divisors. We verify the latter for discrete subgroups in nilpotent Lie groups.
For the particular case of time-frequency translates of Euclidean space, our approach provides a simple and self-contained proof of the Heil--Ramanathan--Topiwala (HRT) conjecture for subsets of arbitrary discrete subgroups.
\end{abstract}

\section{Introduction}
Given a point $(x,\xi) \in \R^d \times \R^d = \R^{2d}$,  the associated time-frequency translation $\pi(x, \xi)$ is the unitary operator on $L^2(\R^d)$ given by
\begin{align} \label{eq:TF}
\pi(x,\xi) g(t) = e^{2 \pi i \xi \cdot t} g(t-x), \quad  t \in \mathbb{R}^d. 
\end{align}
The famous Heil--Ramanathan--Topiwala (HRT) conjecture \cite[Conjecture]{heil1996linear} states that for any nonzero $g \in L^2 (\mathbb{R}^d)$ and any finite subset $\Lambda \subseteq \mathbb{R}^{2d}$, the  system 
\begin{align} \label{eq:gabor}
\pi (\Lambda) g = \{ \pi(\lambda) g : \lambda \in \Lambda \}
\end{align}
is linearly independent in $L^2(\R^d)$. See the surveys \cite{heil2006linear, heil2015hrt} for an overview of the background, motivation and many partial results on the conjecture obtained so far.

One of the most important partial results on the HRT conjecture is the following theorem proved in \cite{linnell1999neumann}.

\begin{theorem} \label{thm:linnell}
Let $\Lambda \subseteq \Gamma$ be a finite subset of a discrete subgroup $\Gamma \leq \mathbb{R}^{2d}$. 
For any nonzero $g \in L^2 (\mathbb{R}^d)$, the system $\pi(\Lambda) g$ is linearly independent. 
\end{theorem}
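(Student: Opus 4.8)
The plan is to turn linear independence into the injectivity of a single operator attached to the twisted group ring of $\Gamma$, and then to exclude a nontrivial kernel by a trace computation. First I would note that the translations \eqref{eq:TF} satisfy $\pi(z)\pi(z')=\sigma(z,z')\,\pi(z+z')$ for the Heisenberg $2$-cocycle $\sigma$, so $\pi$ restricts to a $\sigma$-projective unitary representation of the discrete group $\Gamma$. As $\Gamma\leq\R^{2d}$ is finitely generated, torsion-free and abelian, $\Gamma\cong\Z^{n}$ for some $n\leq 2d$. A relation $\sum_{\lambda\in\Lambda}c_\lambda\pi(\lambda)g=0$ with not all coefficients zero says exactly that the element $a=\sum_{\lambda\in\Lambda}c_\lambda\delta_\lambda$ of the twisted group ring $\mathbb{C}^{\sigma}\Gamma$ is nonzero while $\pi(a)g=0$. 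It therefore suffices to prove that $\pi(a)$ is injective on $L^2(\R^d)$ for every nonzero $a\in\mathbb{C}^{\sigma}\Gamma$.

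Next I would set up the analytic bookkeeping. Since $\Gamma$ is abelian, hence amenable, the von Neumann algebra $M=\pi(\Gamma)''$ is finite and carries a faithful normal trace $\tau$ normalized so that $\tau(\pi(\gamma))=0$ for $\gamma\neq e$; that is, $\tau$ restricts to the canonical trace on $\mathbb{C}^{\sigma}\Gamma$. Because $\ker\pi(a)$ is invariant under the commutant $M'$, the orthogonal projection $P$ onto it belongs to $M$, and the injectivity of $\pi(a)$ is equivalent to $\tau(P)=0$. The problem is thereby reduced to showing that the von Neumann dimension $\tau(P)=\dim_\tau\ker\pi(a)$ vanishes whenever $a\neq 0$.

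The decisive step combines two inputs. Algebraically, $\Z^{n}$ is bi-orderable, hence locally indicable, and I would prove that the twisted group ring of a locally indicable group has no nontrivial zero divisors by a Malcev--Neumann style leading-term argument with respect to the order: the twisted product of the highest-order terms of two nonzero elements is a nonzero scalar times a single group element, so it cannot cancel. Analytically, I would exploit that $\Gamma$ is amenable to pass from this algebraic domain property to the vanishing of $\tau(P)$: along a F\o lner sequence $(F_k)$, the dimension $\dim_\tau\ker\pi(a)$ is the limit of the normalized kernel dimensions of the finite compressions of $\pi(a)$, and the order structure lets one bound those kernels by $o(|F_k|)$, forcing the limit to be $0$. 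Hence $P=0$, $\pi(a)$ is injective, and $\pi(\Lambda)g$ is linearly independent.

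The main obstacle, I expect, is the bridge between the purely algebraic absence of zero divisors in $\mathbb{C}^{\sigma}\Gamma$ and the analytic injectivity of the concrete operator $\pi(a)$ on $L^2(\R^d)$: the trivial representation of $\Z$ (where $1-\delta_1\mapsto 0$) already shows that the domain property alone cannot force injectivity in an arbitrary representation. The argument must genuinely use that the Schr\"odinger representation is quasi-equivalent to a multiple of the regular representation---equivalently, that the canonical trace $\tau$ above exists and is faithful on $M$---together with amenability to transfer zero-divisor freeness into a statement about $L^2$-kernel dimensions. Establishing that faithful canonical trace for the concrete system $\pi(\Lambda)g$, and making the F\o lner/integrality transfer rigorous, is where I anticipate the real work to lie.
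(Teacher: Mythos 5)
Your proposal is correct in outline and rests on exactly the same three pillars as the paper's proof: (i) square-integrability ties $\pi|_\Gamma$ to the twisted regular representation $\lambda_\Gamma^\sigma$; (ii) amenability of $\Gamma \cong \Z^n$ and a F\o lner dimension count transfer the absence of algebraic zero divisors in $\C(\Gamma,\sigma)$ into the absence of $\ell^2$-kernel; (iii) an order/degree argument shows $\C(\Z^n,\sigma)$ has no nontrivial zero divisors (your lexicographic leading-term argument is essentially the paper's \Cref{ex:integers_zero_divisor} in order language). The difference is packaging: you run (i)--(ii) in the von Neumann algebra framework of Linnell's original proof (canonical trace $\tau$ on $M=\pi(\Gamma)''$, von Neumann dimension of $\ker \pi(a)$), whereas the paper deliberately avoids that machinery and instead runs a contrapositive chain of elementary statements (\Cref{lem:lineardependG}, \Cref{lem:reductionGamma}, \Cref{prop:l2zerodivisor}). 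This is not purely cosmetic at one point: your claim that $M$ carries a \emph{faithful} normal trace with $\tau(\pi(\gamma))=0$ for $\gamma \neq e$ amounts to $\pi|_\Gamma$ being quasi-\emph{equivalent} to a multiple of $\lambda_\Gamma^\sigma$, while admissibility plus a fundamental domain only gives quasi-\emph{containment}, i.e.\ $\pi|_\Gamma$ is unitarily equivalent to a subrepresentation of $\bigoplus_{i \in \N}\lambda_\Gamma^\sigma$; if the central support of the corresponding projection were proper, the trace values and faithfulness you posit would need a separate argument. You rightly flag this as ``the real work,'' but the efficient fix is to drop the trace on $M$ entirely: once $\pi|_\Gamma$ sits inside $\bigoplus_i \lambda_\Gamma^\sigma$, injectivity of $\lambda_\Gamma^\sigma(a)$ on $\ell^2(\Gamma)$ automatically restricts to injectivity of $\pi(a)$ on the invariant subspace, which is exactly how the paper arranges the argument and is all that containment is needed for. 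Likewise, your F\o lner step is the paper's \Cref{prop:l2zerodivisor} (Elek's argument); the clean formulation does not compress along $F_n$ (where boundary kernel can appear) but restricts $\lambda_\Gamma^\sigma(a)$ to sequences supported on the $K$-interior of $F_n$ with $K=\supp(a)$, where any kernel element is finitely supported and hence an algebraic zero divisor, so those finite-dimensional kernels vanish identically rather than being merely $o(|F_n|)$. One last small caution: for $\Z^n$ your conflation of ``bi-orderable'' and ``locally indicable'' is harmless, but for general locally indicable groups the leading-term argument is unavailable and the paper instead uses Higman's induction (\Cref{prop:zerodivisor_indicable}).
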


The proof of \Cref{thm:linnell} given in \cite{linnell1999neumann} uses group von Neumann algebras and analytic versions of the zero divisor conjecture \cite{linnell1991zero}. 
For dimension $d = 1$, alternative proofs of \Cref{thm:linnell} with more analytic arguments have been given in \cite{demeter2013on, antezana2020linear, bownik2010linear}. See also \cite{oussa2019hrt} for an approach towards \Cref{thm:linnell} through analysis on the Heisenberg group and representation theory.

The first aim of the present note is to show that the linear dependence of a system as in \Cref{thm:linnell} implies the existence of nontrivial zero divisors in a twisted group ring of $\mathbb{Z}^{n}$. Since no such zero divisors exist (see \Cref{ex:integers_zero_divisor}), this provides a simple and self-contained proof of \Cref{thm:linnell}. The relation (or lack thereof) between the HRT conjecture and the zero divisor conjecture for the ordinary group ring of the Heisenberg group is the topic of the survey \cite{heil2015hrt}. 
Our second aim is to obtain extensions of \Cref{thm:linnell} that cover new classes of groups and  representations. We expand on both aspects in the next subsections.

\subsection{Proof strategy} \label{sec:proof}
The approach towards Theorem \ref{thm:linnell} used in this note is based on a relation between linear independence of orbits of discrete subgroups \eqref{eq:gabor} and the nonexistence of zero divisors for twisted convolution. To be more explicit, let $\sigma$ be the 2-cocycle coming from the composition rule for time-frequency translates
\[ \pi(z)\pi(z') = \sigma(z,z')\pi(z+z') , \qquad z,z' \in \R^{2d},  \]
that is,
\begin{equation}
    \sigma(z,z') = e^{-2\pi i x \cdot \xi'}, \quad z = (x,\xi), z' = (x',\xi') . \label{eq:time-freq-cocycle}
\end{equation}
For a discrete subgroup $\Gamma \leq \mathbb{R}^{2d}$, the $\sigma$-twisted convolution of two finitely supported, complex-valued sequences $a,b$ on $\Gamma$ is given by
\[
(a \ast_{\sigma} b)(\gamma') = \sum_{\gamma \in \Gamma} a(\gamma) b(\gamma' - \gamma) \sigma(\gamma, \gamma' - \gamma), \quad \gamma' \in \Gamma.
\]
 The set of finitely supported sequences on $\Gamma$ equipped with $\sigma$-twisted convolution forms an algebra $\C(\Gamma,\sigma)$, which is often called a \emph{twisted group ring} of $\Gamma$. An element $a \in \C(\Gamma,\sigma)$ is said to be a \emph{zero divisor} if there exists nonzero $b \in \C(\Gamma,\sigma)$ such that $a \ast_{\sigma} b =  0$.

Our proof method for \Cref{thm:linnell} consists of showing that the lack of nontrivial zero divisors in the twisted group ring $\C(\Gamma,\sigma)$ implies the linear independence of the system $\pi(\Gamma)g$ for any nonzero $g \in L^2(\R^d)$. Arguing by contraposition, we show that the existence of a nonzero function with linearly dependent time-frequency translates would imply the existence of a nonzero function in $L^2(\R^{2d})$ with a linearly dependent set of $\sigma$-twisted translates (\Cref{lem:lineardependG}). This further implies the existence of a nonzero sequence in $\ell^2(\Gamma)$ with linearly dependent twisted translates (\Cref{lem:reductionGamma}), which ultimately implies the existence of a nontrivial zero divisor in $\C(\Gamma,\sigma)$ (\Cref{prop:l2zerodivisor}). We provide a simple direct proof for the nonexistence of zero divisors in $\C(\Gamma,\sigma)$ in \Cref{ex:integers_zero_divisor}.

For ordinary (nontwisted) convolution, several methods used in this note can already be found in the literature as pointed out throughout the text. In particular, the relation between linear independence of translates and zero divisors for convolution appears already in \cite{linnell2017linear}, and the reduction from linear independence of (nontwisted) translates in $\ell^2(\Gamma)$ to the existence of nontrivial zero divisors in the (nontwisted) group ring $\C\Gamma$ is contained in \cite{elek2003analytic}. These results were used in \cite[Proposition 6.3]{linnell2017linear} to prove a special case of Theorem \ref{thm:linnell} for discrete subgroups of $\mathbb{R}^{2d}$ generated by finitely many points $(a_1, b_1), \ldots, (a_n,b_n) \in \mathbb{R}^{2d}$ such that $a_k \cdot b_{k'} \in \mathbb{Q}$ for $1 \leq k,k' \leq n$. However, it appears that the general version of \Cref{thm:linnell} for arbitrary discrete subgroups cannot be deduced from \cite{elek2003analytic, linnell2017linear}.

Our main contribution is to show that by using twisted convolution (instead of ordinary convolution) a streamlined proof of the general version of \Cref{thm:linnell} can be obtained. It is expected that the proof presented here is accessible to all interested readers.

\subsection{Extensions}
 The overall proof approach outlined in Section \ref{sec:proof} works naturally in the general setting of amenable locally compact groups. This allows us to also obtain extensions of \Cref{thm:linnell} to more general groups and projective representations. Such extensions are of interest in view of comments made on \cite[p. 2790]{heil1996linear}, where it is written that understanding the linear independence problem more generally would be of great interest.

One natural setting to which \Cref{thm:linnell} extends is the class of square-integrable projective representations of nilpotent Lie groups. The time-frequency translations given in \Cref{eq:TF} provide the easiest example of such a projective representation, and generally any such representation $\pi$ can be realized to act in a Hilbert space $\Hpi = L^2 (\mathbb{R}^n)$ by means of generalized modulations and translations, cf. \cite{corwin1990representations}. This class of representations has recently been used for various forms of generalized time-frequency analysis, see, e.g., \cite{grochenig2018orthonormal, groechenig2021new, gabardo2021on, oussa2022orthonormal, fischer2018heisenberg, bedos2022smooth, grochenig2020balian}.

Our main result for nilpotent Lie groups is the following theorem:

 \begin{theorem} \label{thm:intro2}
     Let $G$ be a connected, simply connected, nilpotent Lie group and let $\pi$ be an irreducible, square-integrable, projective unitary representation on a Hilbert space $\Hpi$. For any finite subset $\Lambda \subseteq \Gamma$ of a discrete subgroup $\Gamma \leq G$ and any nonzero $g \in \Hpi$,
     the coherent system
     \[
      \pi (\Lambda) g = \{ \pi(\lambda) g : \lambda \in \Lambda \}
     \]
 is linearly independent. 
 \end{theorem}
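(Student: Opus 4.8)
The plan is to run the reduction strategy of Section \ref{sec:proof} in the generality of amenable groups and then to settle the resulting zero-divisor question for discrete subgroups of nilpotent Lie groups. Since $G$ is nilpotent it is in particular amenable, and the same goes for its discrete subgroup $\Gamma$. I would first record that any square-integrable projective representation $\pi$ of $G$ can be realized on $L^2(\mathbb{R}^n)$ by generalized translations and modulations, as in \cite{corwin1990representations}, so that the orthogonality relations for $\pi$ play the role that the short-time Fourier transform played in the Euclidean case. Writing $\sigma$ for the $\mathbb{T}$-valued $2$-cocycle determined by $\pi(\gamma)\pi(\gamma') = \sigma(\gamma,\gamma')\pi(\gamma\gamma')$ on $\Gamma$, I would then establish amenable-group analogues of \Cref{lem:lineardependG}, \Cref{lem:reductionGamma} and \Cref{prop:l2zerodivisor}. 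The conclusion of this step is that a linear dependence among $\pi(\Lambda)g$ for some nonzero $g$ would produce a nontrivial zero divisor in the twisted group ring $\C(\Gamma,\sigma)$.

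It then remains to prove that $\C(\Gamma,\sigma)$ has no nontrivial zero divisors, and this is where the nilpotent structure enters. A discrete subgroup of a connected, simply connected nilpotent Lie group is finitely generated, torsion-free and nilpotent. I would show such groups are locally indicable: every nontrivial finitely generated subgroup $H \leq \Gamma$ is again torsion-free nilpotent, hence has nontrivial and necessarily infinite abelianization, since a finite abelianization would force the nilpotent group $H$ itself to be finite; consequently $H$ surjects onto $\mathbb{Z}$. The target is then a twisted analogue of Higman's theorem: the twisted group ring over $\mathbb{C}$ of a locally indicable group contains no nontrivial zero divisors.

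For the zero-divisor statement I would argue by reduction and induction. Any putative zero divisors are finitely supported, hence lie in $\C(H,\sigma|_H)$ for some finitely generated subgroup $H$, so it suffices to treat finitely generated $H$; for these I would induct on the Hirsch length. Choosing a surjection $\phi\colon H \twoheadrightarrow \mathbb{Z}$ with kernel $N$, itself finitely generated torsion-free nilpotent of strictly smaller Hirsch length, the twisted group ring $\C(H,\sigma|_H)$ becomes a crossed product of $R := \C(N,\sigma|_N)$ by $\mathbb{Z}$: fixing $t \in H$ with $\phi(t)$ a generator of $\mathbb{Z}$, every element is a finite sum $\sum_m r_m t^m$ with $r_m \in R$, and conjugation by $t$ induces an automorphism of $R$ up to the cocycle. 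Given two nonzero elements, a comparison of their lowest-degree terms in $t$ shows the product is nonzero: since $R$ is a domain by the inductive hypothesis, the automorphism is injective, and the cocycle contributes only nonzero scalars, so the product of the leading coefficients cannot vanish.

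The main obstacle I anticipate is concentrated in the two heart steps above. First, carrying the Euclidean reduction chain over to an abstract square-integrable projective representation requires the correct substitute for the ambiguity function and a careful treatment of the cocycle, so that linear dependence descends to $\ell^2(\Gamma)$ and then to $\C(\Gamma,\sigma)$; amenability of $\Gamma$ is exactly what makes this final descent work. Second, the crossed-product leading-coefficient argument must be set up so that the $\mathbb{T}$-valued twisting is genuinely harmless, the delicate point being to verify that conjugation by $t$ defines an automorphism of $R$ compatible with $\sigma|_N$ and that the cocycle values, being units in $\mathbb{C}$, never interfere with the nonvanishing of the leading terms.
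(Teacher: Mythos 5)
Your proposal is correct, and its first half --- admissibility of all nonzero vectors via the orthogonality relations for the unimodular group $G$, followed by the reduction chain from a linear dependence in $\pi(\Gamma)g$ down to a nontrivial zero divisor in $\C(\Gamma,\sigma)$ using amenability of $\Gamma$ --- is exactly the paper's route through \Cref{thm:intro3}. Where you genuinely diverge is in the algebraic half. The paper proves that $\C(\Gamma,\sigma)$ has no nontrivial zero divisors for \emph{every} locally indicable group (\Cref{prop:zerodivisor_indicable}), via a Higman-style induction on $|\supp(a)|+|\supp(b)|$ using homogeneous decompositions relative to a degree map on the subgroup generated by the supports; local indicability of torsion-free nilpotent groups is then obtained by induction on nilpotency class, invoking torsion-freeness of $\Gamma/Z(\Gamma)$ and Higman's extension lemma (\Cref{lem:nilpotent_indicable}). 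You instead pass to the finitely generated subgroup $H$ generated by the supports and induct on its Hirsch length, writing $\C(H,\sigma|_H)$ as a crossed product of $R = \C(N,\sigma|_N)$ by $\Z$ over the kernel $N$ of a surjection $\phi\colon H \to \Z$, and comparing lowest $t$-degree coefficients; your justification that such a surjection exists (a nontrivial finitely generated torsion-free nilpotent group has infinite abelianization) is also more direct than the paper's. Your argument is sound: $N$ is finitely generated (finitely generated nilpotent groups are Noetherian), torsion-free, of Hirsch length $h(H)-1$; conjugation by $\delta_t$ is a genuine algebra automorphism of $R$ because it permutes the basis elements $\delta_\nu$, $\nu \in N$, up to unimodular scalars; and the cocycle contributes only unit scalars, so the lowest-degree coefficient $r_{m_0}\,\alpha^{m_0}(s_{k_0})\,\sigma(t^{m_0},t^{k_0})$ of a product of nonzero elements cannot vanish once $R$ is a domain. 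What the paper's approach buys is generality: its zero-divisor theorem covers arbitrary locally indicable groups, which matters for its broader \Cref{qu:twisted_zero_divisor}, whereas your Hirsch-length induction is confined to the polycyclic setting (the kernel $N$ need not be finitely generated for a general locally indicable group). What your approach buys is a shorter, standard ring-theoretic argument that is entirely adequate for the nilpotent Lie group theorem at hand.
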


 \Cref{thm:intro2} might fail without the assumption that the projective representation $\pi$ is square-integrable. Indeed, for a general irreducible projective representation $\pi$, its projective kernel
\[
\pker{\pi} := \{ x \in G : \pi (x) \in \mathbb{T} I_{\Hpi} \}
\]
might be nontrivial, which clearly yields a linear dependent system in its orbit. However, the assumption that $\pi$ is square-integrable yields that $\pker{\pi}$ is a compact subgroup of the connected, simply connected nilpotent group $G$ (see, e.g., \cite[Lemma 2.3]{enstad2022sufficient}), and thus it must be trivial. 

 We mention that the linear independence of subsystems of orbits of square-integrable unitary representations of locally compact groups has been studied earlier in \cite{linnell2017linear}. However, the paper \cite{linnell2017linear} treats only genuine (nonprojective)  representations that are square-integrable in the strict sense, which do not exist for (simply connected) nilpotent Lie groups. To treat projective representations, the use of twisted convolutions as explained in \Cref{sec:proof} appears to be essential.

It is a natural question to what generality the statements of \Cref{thm:linnell} and \Cref{thm:intro2} can be extended. Our most general statement, \Cref{thm:intro3}, from which \Cref{thm:intro2} (and \Cref{thm:linnell}) are obtained, states that for a $\sigma$-projective unitary representation $(\pi,\Hpi)$ with admissible vectors,  the coherent system $\pi(\Gamma)g$ ($\Gamma \subseteq G$ discrete subgroup and $g \in \Hpi$ nonzero) is linearly independent whenever the twisted group ring $\C(\Gamma,\sigma)$ contains no nontrivial zero divisors. Thus, the problem of linear independence is reduced to the algebraic problem of zero divisors in twisted group rings. For the trivial 2-cocycle $\sigma \equiv 1$, the famous zero divisor conjecture predicts that the ordinary (nontwisted) group ring $\C\Gamma$ contains no nontrivial zero divisors whenever $\Gamma$ is torsion-free. More generally, one can ask whether $\C(\Gamma,\sigma)$ contains no nontrivial zero divisors for a torsion-free group $\Gamma$, where $\sigma$ is an arbitrary 2-cocycle on $\Gamma$ (see \Cref{qu:twisted_zero_divisor}). A positive answer to this question would imply, via \Cref{thm:intro3}, the linear independence of $\pi(\Gamma)g$ for all discrete subgroups $\Gamma$ of torsion-free, amenable groups $G$.

\subsection*{Notation} The support of a sequence $a : \Gamma \to \mathbb{C}$ is denoted by $\supp(a) = \{\gamma \in \Gamma : a(\gamma) \neq 0 \}$. The space of all complex-valued sequences on $\Gamma$ with finite support is denoted by $\FS$. For $\gamma \in \Gamma$, the sequence $\delta_{\gamma} : \Gamma \to \mathbb{C}$ is defined by $\delta_{\gamma} (\gamma) = 1$ and $\delta_{\gamma} (\gamma') = 0$ for $\gamma \neq \gamma'$. Denoting by $\mathcal{B}(\mathcal{H})$ the space of bounded linear operators on a Hilbert space $\mathcal{H}$, the commutant of a subset $M \subseteq \mathcal{B}(\mathcal{H})$ is the set $M' = \{ T \in \mathcal{B}(\mathcal{H}) : TS = ST \; \text{for all} \; S \in \mathcal{B}(\mathcal{H})\}$.

\section{Linear dependence of twisted left translates} \label{sec:linear}
The purpose of this section is to reduce the problem of linear dependence of an orbit of a square-integrable projective representation into determining the linear dependence of twisted translates of finite sequences on a discrete subgroup. The results in this section are inspired by corresponding results in \cite{linnell2017linear, elek2003analytic}.

Throughout, let $G$ be a second-countable locally compact group with identity element $e$. A \emph{2-cocycle} on $G$ is a measurable function $\sigma \colon G \times G \to \mathbb{T}$ that satisfies the following properties:
\begin{align} \label{eq:cocycle}
    \sigma(x,y)\sigma(xy,z) &= \sigma(x,yz)\sigma(y,z), \qquad x,y,z \in G, \\
    \sigma(e,e) &= 1 .
\end{align}
A \emph{$\sigma$-projective unitary representation} $(\pi, \Hpi)$ of $G$ on a Hilbert space $\Hpi$ is a 
measurable map $\pi : G \to \mathcal{U}(\Hpi)$ satisfying
\[
\pi(x) \pi(y)  = \sigma(x,y) \pi(xy) , \quad \text{for all $x,y \in G$},
\]
for some function $\sigma : G \times G \to \mathbb{C}$, which is necessarily a $2$-cocycle. 

The projective representation $\pi$ will always be assumed to have an \emph{admissible vector}, that is, a vector $h \in \Hpi$ such that the coefficient transform $V_h : \Hpi \to L^{\infty} (G)$ given by
\[
V_h f(x) = \langle f, \pi(x) h \rangle , \qquad f \in \Hpi ,
\]
defines an isometry into $L^2(G)$. If $\pi$ is irreducible, i.e., if $\{0\}$ and $\Hpi$ are the only closed subspaces $\mathcal{K}$ of $\Hpi$ such that $\pi(x) \mathcal{K} \subseteq \mathcal{K}$ for any $x \in G$, then any nonzero vector $h \in \Hpi$ satisfying $V_h h \in L^2 (G)$ is (a multiple of) an admissible vector by  the orthogonality relations, see, e.g., \cite{duflo1976on, carey1976square}. 
See also \cite{fuehr2005abstract, currey2012admissibility} 
for various classes of possibly reducible representations.

The significance of a $\sigma$-projective representation $(\pi, \Hpi)$ admitting an admissible vector is that it is unitarily equivalent to a subrepresentation of the \emph{twisted left-regular representation} $(\lambda_G^{\sigma}, L^2 (G))$, which is defined by the action
\begin{align} \label{eq:twistedtranslation}
(\lambda_G^{\sigma} (x) F)(y) = \sigma(x, x^{-1} y) F(x^{-1} y), \quad x,y \in G,
\end{align}
for $F \in L^2 (G)$.  Indeed, if $h \in \Hpi$ is admissible, then 
$
V_h : \Hpi \to L^2 (G)$
unitarily intertwines $\pi$ and $\lambda_G^{\sigma}$, in the sense that
\begin{align} \label{eq:covariance}
V_h (\pi(x) f)(y) = \sigma(x, x^{-1} y) V_h f(x^{-1} y) = (\lambda_G^{\sigma} (x) V_h f) (y), \quad x,y \in G,
\end{align}
for any $f \in \Hpi$. 

\subsection{Linear dependence of translates in $L^2 (G)$}
The covariance relation \eqref{eq:covariance} immediately yields the following result.

\begin{lemma} \label{lem:lineardependG}
Let $\Gamma \subseteq G$. If there exist a nonzero $g \in \Hpi$ such that $\pi(\Gamma) g$ is linearly dependent,  then there exists a nonzero $F \in L^2 (G)$ such that $\lambda_G^{\sigma} (\Gamma)F$ is linearly dependent. 
\end{lemma}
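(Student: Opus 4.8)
The plan is to prove this by a direct application of the covariance relation \eqref{eq:covariance}. The statement is an existence result, so I would argue by constructing the desired $F \in L^2(G)$ explicitly from the given data. Suppose $g \in \Hpi$ is nonzero and $\pi(\Gamma)g$ is linearly dependent. Then there exists a finite subset $\{\gamma_1, \ldots, \gamma_n\} \subseteq \Gamma$ and scalars $c_1, \ldots, c_n \in \mathbb{C}$, not all zero, such that $\sum_{k=1}^n c_k \pi(\gamma_k) g = 0$ in $\Hpi$.

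The key step is to push this dependence relation through the coefficient transform $V_h$ associated to an admissible vector $h$. Fix any admissible vector $h \in \Hpi$ (the existence of which is a standing assumption on $\pi$), and set $F := V_h g \in L^2(G)$. Since $V_h$ is linear and bounded, applying it to the dependence relation gives $\sum_{k=1}^n c_k V_h(\pi(\gamma_k) g) = 0$. By the covariance relation \eqref{eq:covariance}, each term satisfies $V_h(\pi(\gamma_k) g) = \lambda_G^{\sigma}(\gamma_k) V_h g = \lambda_G^{\sigma}(\gamma_k) F$, so that $\sum_{k=1}^n c_k \lambda_G^{\sigma}(\gamma_k) F = 0$ in $L^2(G)$. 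Thus $\lambda_G^{\sigma}(\Gamma) F$ is linearly dependent with the same nontrivial coefficients.

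It remains to check that $F = V_h g$ is nonzero, and this is the one point deserving genuine care. Here I would invoke the fact that $V_h$ is an isometry into $L^2(G)$ (part of the definition of admissibility), so $\|F\|_{L^2(G)} = \|V_h g\|_{L^2(G)} = \|g\|_{\Hpi} \neq 0$ because $g \neq 0$. This injectivity is precisely what prevents the construction from collapsing to the trivial relation. I expect this to be the only real obstacle, and it is mild: it is resolved entirely by the isometry property built into the hypothesis that $\pi$ admits an admissible vector. The rest is a routine transfer of a finite linear combination through a bounded linear intertwiner, and the proof is short.
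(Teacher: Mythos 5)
Your proof is correct and follows exactly the same route as the paper: fix an admissible vector $h$, set $F := V_h g$, transfer the dependence relation through the intertwiner $V_h$ using the covariance relation \eqref{eq:covariance}, and note that $F \neq 0$ since $V_h$ is an isometry. The paper leaves the last point implicit, so your explicit justification via $\|V_h g\| = \|g\| \neq 0$ is a small but welcome addition.
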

\begin{proof}
Let $g \in \Hpi$ be nonzero and suppose that there exist constants $\alpha_1, \alpha_2, ..., \alpha_n \in \mathbb{C}$, not all zero, and points $\gamma_1, \gamma_2, ..., \gamma_n \in \Gamma$ such that $\sum_{k = 1}^n \alpha_k \pi(\gamma_k) g = 0$. Let $h$ be an admissible vector. Then $F :=V_h g$ is a nonzero element of $L^2(G)$, and hence, by Equation  \eqref{eq:covariance},
\[
\sum_{k = 1}^n \alpha_k \lambda_G^{\sigma} (\gamma_k) F = V_h \bigg( \sum_{k = 1}^n \alpha_k \pi (\gamma_k) g \bigg) = 0 ,
\]
as desired.
\end{proof}

\subsection{Linear dependence of translates in $\ell^2 (\Gamma)$}
The aim of this subsection is to reduce the problem of linear dependence of twisted translates on $L^2(G)$ along a discrete subgroup $\Gamma$ to twisted translates on $\ell^2(\Gamma)$. For this, we denote by $\lambda_\Gamma^\sigma$ the $\sigma$-projective left regular representation of $\Gamma$ on $\ell^2(\Gamma)$ (cf. \Cref{eq:twistedtranslation}), where the 2-cocycle $\sigma$ on $G$ is restricted to $\Gamma$.

\begin{lemma} \label{lem:reductionGamma}
Let $\Gamma \leq G$ be a discrete subgroup.
If there exists a nonzero $F \in L^2 (G)$ such that $\lambda_G^{\sigma} (\Gamma) F$ is linearly dependent, then there exists a nonzero $c \in \ell^2 (\Gamma)$ such that $\lambda_{\Gamma}^{\sigma} (\Gamma) c$ is linearly dependent. 
\end{lemma}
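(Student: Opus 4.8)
The plan is to prove Lemma~\ref{lem:reductionGamma} by decomposing $L^2(G)$ along cosets of the discrete subgroup $\Gamma$ and extracting a single coset where linear dependence survives. The key observation is that the twisted left regular representation $\lambda_G^\sigma(\Gamma)$ acts on $L^2(G)$ in a way that is compatible with this coset decomposition: since $\Gamma$ is discrete, we may choose a measurable fundamental domain (Borel transversal) $\Omega \subseteq G$ for the right cosets $\Gamma \backslash G$, so that every $x \in G$ is uniquely written $x = \gamma \omega$ with $\gamma \in \Gamma$, $\omega \in \Omega$. This induces a unitary identification $L^2(G) \cong \ell^2(\Gamma) \otimes L^2(\Omega)$ (up to normalization by the modular function, which I would handle by noting that $\Gamma$ acts by left translation and the relevant measure on $\Omega$ is the pushforward of Haar measure).

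First I would translate the hypothesis into this decomposition. Suppose $\sum_{k=1}^n \alpha_k \lambda_G^\sigma(\gamma_k) F = 0$ with not all $\alpha_k$ zero and $F \in L^2(G)$ nonzero. Using \eqref{eq:twistedtranslation}, the value of $\lambda_G^\sigma(\gamma_k)F$ at a point $\gamma\omega$ depends on $F$ evaluated at $\gamma_k^{-1}\gamma\omega$, which lands in the \emph{same} coset slice $\Gamma\omega$. Thus the relation decouples over $\omega \in \Omega$: for almost every $\omega$, the sequence $c_\omega \in \ell^2(\Gamma)$ defined by $c_\omega(\gamma) := F(\gamma\omega)$ (suitably normalized) satisfies $\sum_{k=1}^n \alpha_k \lambda_\Gamma^\sigma(\gamma_k) c_\omega = 0$, where the cocycle appearing is exactly $\sigma$ restricted to $\Gamma$ acting on $\Omega$-slices. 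The point is that the twisting factor $\sigma(\gamma_k, \gamma_k^{-1}\gamma\omega)$ needs to be reconciled with the cocycle $\sigma(\gamma_k, \gamma_k^{-1}\gamma)$ appearing in $\lambda_\Gamma^\sigma$; I expect this to require either a coboundary adjustment depending on $\omega$ or a direct verification using the cocycle identity \eqref{eq:cocycle}, and this is where I anticipate the main technical obstacle.

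The final step is a selection argument. Since $F \neq 0$ in $L^2(G)$, we have $c_\omega \neq 0$ for $\omega$ in a positive-measure set, and by Fubini $c_\omega \in \ell^2(\Gamma)$ for almost every $\omega$. Hence there exists at least one $\omega_0 \in \Omega$ for which $c := c_{\omega_0}$ is simultaneously nonzero, square-summable, and satisfies the linear dependence relation $\sum_{k=1}^n \alpha_k \lambda_\Gamma^\sigma(\gamma_k) c = 0$. This $c$ is the desired element witnessing that $\lambda_\Gamma^\sigma(\Gamma)c$ is linearly dependent.

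The hard part will be handling the cocycle bookkeeping cleanly. The twist $\sigma(\gamma, \gamma^{-1}y)$ for $y = \gamma'\omega$ in $L^2(G)$ must be shown to agree, up to an $\omega$-dependent phase that can be absorbed into $c_\omega$, with the twist $\sigma(\gamma,\gamma^{-1}\gamma')$ governing $\lambda_\Gamma^\sigma$ on $\ell^2(\Gamma)$. I would verify this by applying the $2$-cocycle identity \eqref{eq:cocycle} to the triple $(\gamma, \gamma^{-1}\gamma', \omega)$, which expresses $\sigma(\gamma, \gamma^{-1}\gamma'\omega)$ in terms of $\sigma(\gamma,\gamma^{-1}\gamma')$ and phases $\sigma(\gamma', \omega)$, $\sigma(\gamma^{-1}\gamma', \omega)$ depending only on $\gamma'$ and $\omega$; the latter can be folded into the definition of $c_\omega(\gamma') := \sigma(\gamma', \omega)\,F(\gamma'\omega)$, leaving precisely the restricted cocycle on $\Gamma$. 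Measurability of $\Omega$ and of $\omega \mapsto c_\omega$ is routine given second countability of $G$, so I would treat it briefly.
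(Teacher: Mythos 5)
Your proposal is correct in substance, but it takes a genuinely different route from the paper's. Both arguments start from the same geometric input, a Borel fundamental domain $\Omega$ with $G = \bigsqcup_{\gamma \in \Gamma} \gamma\Omega$ (which the paper obtains from cited results of Mackey and Bekka--de la Harpe rather than treating as routine), but they then split $L^2(G) \cong \ell^2(\Gamma) \otimes L^2(\Omega)$ along opposite tensor factors. You slice pointwise over $\Omega$: a direct-integral-style argument in which the dependence relation decouples, for almost every $\omega$, into the same relation for the slice sequences $c_\omega$, followed by a Fubini selection of a good $\omega_0$. The paper instead fixes an orthonormal basis $(e_i)_{i\in\N}$ of $\mathcal{H}_\Omega$ and decomposes $L^2(G) = \bigoplus_{i} \mathcal{K}_i$ into $\lambda_G^\sigma(\Gamma)$-invariant subspaces, each unitarily equivalent to $(\lambda_\Gamma^\sigma, \ell^2(\Gamma))$ via intertwining isometries $T_i$; the witness is then $c = T_{i'}^{-1} P_{i'} F$ for any index $i'$ with $P_{i'}F \neq 0$. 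What the paper's version buys is that everything stays at the level of bounded operators and invariant subspaces: no almost-everywhere arguments and no cocycle computations are needed, since the twisting is absorbed into the tautological intertwining property $T_i \lambda_\Gamma^\sigma(\gamma) = \lambda_G^\sigma(\gamma) T_i$. What your version buys is an explicit witness (a concrete slice of $F$) and the stronger conclusion that almost every slice works, at the price of the measure-theoretic and cocycle bookkeeping you flagged. That bookkeeping does go through exactly as you predicted, by applying \eqref{eq:cocycle} to the triple $(\gamma_k, \gamma_k^{-1}\gamma, \omega)$, but note that the phase you fold into the slice should be conjugated: with $c_\omega(\gamma') := \overline{\sigma(\gamma',\omega)}\, F(\gamma'\omega)$ one gets
\[
\Big( \sum_{k=1}^n \alpha_k \lambda_G^\sigma(\gamma_k) F \Big)(\gamma\omega) = \sigma(\gamma,\omega) \Big( \sum_{k=1}^n \alpha_k \lambda_\Gamma^\sigma(\gamma_k) c_\omega \Big)(\gamma), \qquad \gamma \in \Gamma,
\]
so vanishing of the left-hand side for a.e.\ point of $G$ forces the $\ell^2(\Gamma)$-relation for a.e.\ $\omega$. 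Also, your worry about the modular function is unnecessary: $\Gamma$ acts by left translations, which preserve the left Haar measure defining $L^2(G)$, so no normalization correction arises.
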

\begin{proof}
By  \cite[Proposition B.2.4]{bekka2008kazhdan} , there exists a fundamental domain for $\Gamma$ in $G$, that is, a Borel set $\Omega \subseteq G$ such that $G$ is the disjoint union of the sets $\gamma \Omega$ for $\gamma \in \Gamma$. Consequently any $F \in L^2 (G)$ can be represented by the norm convergent series 
\begin{align} \label{eq:sum_fundom}
F = \sum_{\gamma \in \Gamma} F \cdot \mathds{1}_{\gamma \Omega} .
\end{align}
Let $\mathcal{H}_{\Omega}$ denote the closed subspace of $L^2(G)$ consisting of functions whose essential support is contained in $\Omega$, and choose an orthonormal basis $(e_i)_{i \in \N}$ for $\mathcal{H}_{\Omega}$. Then, for each $\gamma \in \Gamma$, the set $(\lambda_G^\sigma(\gamma)e_i)_{i \in \N}$ is an orthonormal basis for $\lambda_G^\sigma(\gamma)\mathcal{H}_\Omega$, which consists exactly of the functions in $L^2(G)$ that are essentially supported on $\gamma \Omega$. Since $\gamma \Omega$ and $\gamma' \Omega$ are disjoint for $\gamma \neq \gamma'$, it follows that $\lambda_G^\sigma(\gamma)\mathcal{H}_\Omega$ and $\lambda_G^\sigma(\gamma')\mathcal{H}_\Omega$ have trivial intersection when $\gamma \neq \gamma'$. In combination with \Cref{eq:sum_fundom}, this shows that $(\lambda_G^\sigma(\gamma)e_i)_{\gamma \in \Gamma, i \in \N }$ is an orthonormal basis for $L^2(G)$. 

For fixed $i \in \mathbb{N}$, define
\[ \mathcal{K}_i = \overline{ \spn \{ \lambda_G^\sigma(\gamma)e_i : \gamma \in \Gamma \} } . \]
Then $L^2(G) = \bigoplus_{i \in \N} \mathcal{K}_i$. Furthermore, each $\mathcal{K}_i$ is invariant under the operators $\lambda_G^\sigma(\gamma)$ for $\gamma \in \Gamma$, so the orthogonal projection $P_i \in \mathcal{B}(L^2(G))$ onto $\mathcal{K}_i$ commutes with these operators. Let also $T_i \colon \ell^2(\Gamma) \to \mathcal{K}_i$ be the surjective linear isometry given by sending $\lambda_\Gamma^\sigma(\gamma)\delta_e$ to $\lambda_G^\sigma(\gamma)e_i$ for $\gamma \in \Gamma$. Evidently, $T_i\lambda_\Gamma^\sigma(\gamma)=\lambda_G^\sigma(\gamma)T_i$ for $\gamma \in \Gamma$ and $i \in \N$.

For proving the claim, suppose that $F \in L^2 (G)$ is nonzero and that there exist scalars $\alpha_1, ..., \alpha_n \in \mathbb{C}$ not all equal to zero and $\gamma_1, ..., \gamma_n \in \Gamma$ such that $\sum_{k = 1}^n \alpha_k \lambda_{G}^{\sigma} (\gamma_k) F = 0$. Since $F \neq 0$, there exists $i' \in \mathbb{N}$ such that $P_{i'}F \neq 0$. Set $c = T_{i'}^{-1}P_{i'}F$. Then
\[
\sum_{k = 1}^n \alpha_k \lambda_{\Gamma}^{\sigma} (\gamma_k) c = T_{i'}^{-1} P_{i'} \bigg( \sum_{k = 1}^n \alpha_k \lambda_G^{\sigma} (\gamma_k) F \bigg) = 0,
\]
as required.
\end{proof}

The above proof employs the standard technique of decomposing $\lambda_G^\sigma|_{\Gamma}$ into a countable direct sum with $\lambda_\Gamma^\sigma$ as summands. In \cite[Proposition 5.1]{linnell2017linear}, the same technique was used for ordinary (nonprojective) representations.

\subsection{Linear dependence of translates in $\mathbb{C} \Gamma$} 
The purpose of this section is to reduce the linear dependence of twisted translates in $\ell^2(\Gamma)$ even further to finitely supported sequences on $\Gamma$ (see \Cref{prop:l2zerodivisor}) under the assumption of amenability of the discrete group. This is the key step in the proofs of our main theorems and is an adaption of \cite[Theorem]{elek2003analytic} to the projective setting.

In addition to the left regular representation, we will also use the $\overline{\sigma}$-twisted right regular representation $(\rho_\Gamma^{\sigma},\ell^2(\Gamma))$ of $\Gamma$, given by
\[
(\rho_\Gamma^{\sigma} (\gamma) c)({\gamma'}) = \overline{\sigma(\gamma', \gamma)} c(\gamma' \gamma), \quad c \in \ell^2(\Gamma), \gamma,\gamma' \in \Gamma.
\]
A basic fact is that $\rho_\Gamma^{\sigma}$ commutes with $\lambda_\Gamma^\sigma$, that is, $\lambda_\Gamma^\sigma(\gamma) \rho_\Gamma^{\sigma}(\gamma') = \rho_\Gamma^{\sigma}(\gamma') \lambda_\Gamma^\sigma(\gamma)$ for all $\gamma,\gamma'\in \Gamma$. In fact, the commutant $\rho_\Gamma^{\sigma}(\Gamma)'$ is equal to the closure of the span of $\lambda_\Gamma^\sigma(\Gamma)$ in the weak operator topology on $\mathcal{B}(\ell^2(\Gamma))$, cf. \cite[Theorem 1]{kleppner1962structure}. On $\rho_\Gamma^{\sigma}(\Gamma)'$, we define the map $\tau \colon \rho_\Gamma^{\sigma}(\Gamma)' \to \C$ by
\[ \tau(T) = \langle T \delta_e, \delta_e \rangle , \qquad T \in \rho_\Gamma^{\sigma}(\Gamma)' . \]
Note that if $V$ is a linear subspace of $\ell^2(\Gamma)$ invariant under $\rho_\Gamma^{\sigma}(\Gamma)$, then the projection $P_V$ onto the closure of $V$ is in $\rho_\Gamma^{\sigma}(\Gamma)'$. Similarly, if $T \in \rho_\Gamma^{\sigma}(\Gamma)'$, then the kernel $\mathcal{N}(T)$ and range $\mathcal{R}(T)$ are invariant under $\rho_\Gamma^{\sigma}(\Gamma)$.
The following properties are standard (see e.g.\ \cite{kleppner1962structure}) but we give their elementary proofs for the sake of being self-contained.

\begin{lemma}\label{lem:trace}
The following properties hold for $S,T \in \rho_\Gamma^{\sigma}(\Gamma)'$:
\begin{enumerate}
    \item[(i)] $\tau(ST) = \tau(TS)$.
    \item[(ii)] $\tau(I) = 1$.
    \item[(iii)] If $\tau(T^*T) = 0$, then $T=0$.
    \item[(iv)] $\tau(P_{\mathcal{N}(T)}) + \tau(P_{\mathcal{R}(T)}) = 1$.
\end{enumerate}
\end{lemma}

\begin{proof}
(i) For proving the claim, note that if $(T_i)_{i \in I}$ is any net in $\rho_\Gamma^{\sigma}(\Gamma)'$ converging to some $T \in \rho_\Gamma^{\sigma}(\Gamma)'$ in the weak operator topology, then $\tau(T_i) \to \tau(T)$ by definition. We prove the claim by first considering special instances of operators in $\rho_\Gamma^{\sigma}(\Gamma)'$ and then use a density argument.

Assume first that $S$ is a finite sum $S = \sum_\gamma c_\gamma \lambda_\Gamma^\sigma(\gamma)$ and that $T = \lambda_\Gamma^\sigma(\gamma')$ for some $\gamma' \in \Gamma$. Then
\[ \tau(ST) = \sum_\gamma c_\gamma \sigma(\gamma,\gamma') \langle \delta_{\gamma\gamma'}, \delta_e \rangle = c_{\gamma'^{-1}} \sigma(\gamma',\gamma'^{-1}) = \sum_\gamma c_\gamma \sigma(\gamma',\gamma) \langle \delta_{\gamma'\gamma}, \delta_e \rangle = \tau(TS). \]
By linearity, this extends to the case when $T$ is also such a finite sum. Next, let $T \in \rho_{\Gamma}^\sigma(\Gamma)'$ be arbitrary. Then $T$ can be expressed as the limit of a net $(T_i)_{i \in I}$ of the above finite sums in the weak operator topology. Since composition is separately continuous in the weak operator topology, it follows that $ST_i \to ST$ and $T_i S \to TS$, and so the first part of the proof gives
\[ \tau(ST) = \lim_i \tau(ST_i) = \lim_i \tau(T_iS) = \tau(TS) . \]
Finally, if $S \in \rho_\Gamma^{\sigma}(\Gamma)'$ is also arbitrary then expressing it as the limit of a net $(S_i)_{i \in I}$ of finite sums in the weak operator topology and using what we have already proved, we obtain
\[ \tau(ST) = \lim_i \tau(S_i T) = \lim_i \tau(TS_i) = \tau(TS) .  \]

(ii) $\tau(I) = \| \delta_e \|^2 = 1$.

(iii) Suppose that $\tau(T^*T) = 0$. Then for any $\gamma \in \Gamma$ we have that
\[ \| T \delta_\gamma \|^2 = \| T \lambda_\Gamma^\sigma(\gamma) \delta_e \|^2 = \| \lambda_\Gamma^\sigma(\gamma) T \delta_e \|^2 = \| T \delta_e \|^2 = \langle T^*T \delta_e, \delta_e \rangle = \tau(T^*T) = 0 . \]
Since the span of $\{ \delta_\gamma : \gamma \in \Gamma \}$ is dense in $\ell^2(\Gamma)$, it follows that $T = 0$.

(iv) Let $T = U|T|$ be the polar decomposition of $T$. Then $P_{\mathcal{R}(T)} = UU^*$ and $P_{\mathcal{R}(T)} = I - U^*U$. Hence, using (i) and (ii), it follows that
\[ \tau(P_{\mathcal{N}(T)}) + \tau(P_{\mathcal{R}(T)}) = \tau(I-U^*U) + \tau(UU^*) = \tau(I) = 1, \]
as desired.
\end{proof}

A countable discrete group $\Gamma$ is called \emph{amenable} if it admits a \emph{(left) Følner sequence}, that is, a sequence $(F_n)_{n \in \N}$ of finite subsets $F_n \subseteq \Gamma$ such that
\begin{equation}
    \lim_{n \to \infty} \frac{|F_n \, \triangle \, \gamma F_n |}{|F_n|} = 0 \qquad \text{for all $\gamma \in \Gamma$},
\end{equation}
where $A \, \triangle \, B$ denotes the symmetric difference of two sets $A$ and $B$. In particular, all abelian and nilpotent groups are amenable. An explicit Følner sequence in $\Gamma = \Z^d$ is given by the sets $F_n = \{ (k_1, \ldots, k_n) \in \Z^d : -n \leq k_i \leq n \}$ for $n \in \mathbb{N}$.

The following result shows that the existence of a nonzero linearly independent orbit in $\ell^2 (\Gamma)$ implies the existence of such an orbit in $\FS$.

\begin{proposition} \label{prop:l2zerodivisor}
  Suppose $\Gamma$ is an amenable group. If there exists nonzero $c \in \ell^2(\Gamma)$ whose orbit $\lambda_\Gamma^\sigma(\Gamma)c$ is linearly dependent, then there also exists nonzero $c' \in \FS$ such that $\lambda_\Gamma^\sigma(\Gamma) c'$ is linearly dependent.
\end{proposition}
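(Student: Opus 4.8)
The plan is to phrase linear dependence in terms of twisted convolution and then to invoke a Følner-type approximation of the von Neumann dimension. First I would record the dictionary: identifying the operator $\lambda_\Gamma^\sigma(\gamma)$ with left twisted convolution by $\delta_\gamma$, a linear dependence $\sum_{k=1}^n \alpha_k \lambda_\Gamma^\sigma(\gamma_k)c = 0$ of the orbit of $c$ is exactly the statement $a \ast_\sigma c = 0$, where $a = \sum_{k} \alpha_k \delta_{\gamma_k} \in \FS$ is nonzero. Thus the hypothesis provides a nonzero finitely supported $a$ and a nonzero $c \in \ell^2(\Gamma)$ in the kernel of the bounded operator $L_a := a \ast_\sigma (\cdot)$ on $\ell^2(\Gamma)$, and it suffices to produce a nonzero finitely supported $c'$ with $a \ast_\sigma c' = 0$: such a $c'$ then has linearly dependent orbit for the very same coefficients. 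In other words, the task reduces to showing that for an amenable $\Gamma$, a nonzero $\ell^2$-kernel of $L_a$ forces a nonzero finitely supported kernel.

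The key point is that $\ker_{\ell^2} L_a$ is invariant under the right twisted translations, which commute with $L_a$; hence the orthogonal projection $P$ onto this kernel lies in the twisted group von Neumann algebra $\VN$ generated by $\{\lambda_\Gamma^\sigma(\gamma) : \gamma \in \Gamma\}$. Since $c \neq 0$, we have $P \neq 0$, and faithfulness of the canonical trace $\tau(T) = \langle T\delta_e, \delta_e\rangle$ gives $\dim_{\VN}\ker_{\ell^2}L_a = \tau(P) > 0$. The engine of the proof is then the Følner approximation of this von Neumann dimension: for an amenable group with Følner sequence $(F_N)$ one has $\dim_{\VN}\ker_{\ell^2}L_a = \lim_{N\to\infty} |F_N|^{-1}\,\dim_{\mathbb C}\ker\big(L_a|_{\ell^2(F_N)}\big)$, where $\ell^2(F_N)$ denotes the finitely supported sequences vanishing off $F_N$. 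This is the twisted analogue of the approximation result underlying \cite{elek2003analytic}; I would obtain it by compressing the positive element $L_a^* L_a \in \VN$ (which is left convolution by a finite element of $\FS$) to $\ell^2(F_N)$ and observing that the kernel of the compression coincides with $\ker\big(L_a|_{\ell^2(F_N)}\big)$.

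With this identity in hand the conclusion is immediate: since the limit is positive, $\dim_{\mathbb C}\ker\big(L_a|_{\ell^2(F_N)}\big) > 0$ for all large $N$, so there exists a nonzero $c' \in \ell^2(F_N) \subseteq \FS$ with $a \ast_\sigma c' = 0$, as required. Equivalently one may run the contrapositive: if no such finitely supported $c'$ existed, i.e.\ if $L_a$ were injective on every $\ell^2(F_N)$, then each finite kernel would be trivial, forcing $\dim_{\VN}\ker_{\ell^2}L_a = 0$ and hence $c = 0$.

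The main obstacle is the approximation identity itself, and specifically the convergence of the atom at $0$ of the spectral measure of $L_a^* L_a$ --- equivalently the lower bound $\liminf_N |F_N|^{-1}\dim_{\mathbb C}\ker\big(L_a|_{\ell^2(F_N)}\big) \geq \dim_{\VN}\ker_{\ell^2}L_a$. This is precisely where amenability is essential, and where a naive argument fails: truncating $c$ and its right translates to $F_N$ produces vectors on which $L_a$ is merely small, i.e.\ approximate null vectors rather than genuine elements of $\ker\big(L_a|_{\ell^2(F_N)}\big)$, so one cannot bypass the quantitative (determinant / Fuglede--Kadison) input behind the approximation theorem. The only additional care needed relative to the untwisted case of \cite{elek2003analytic} is to verify that the $\mathbb{T}$-valued cocycle $\sigma$ leaves the Følner trace estimates unaffected, which it does since it only rescales the matrix entries of the compressions by unimodular constants.
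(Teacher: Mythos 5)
Your reduction and overall architecture do match the paper's: you encode the dependence as $a \ast_\sigma c = 0$ with nonzero $a \in \FS$, note that $\ker L_a$ is invariant under the right twisted translations so that its projection $P$ lies in the twisted group von Neumann algebra and has $\tau(P) = \|P\delta_e\|^2 > 0$, and aim to transfer this positivity to the restrictions $L_a|_{\ell^2(F_N)}$ via a F\o lner approximation of the kernel dimension. The genuine gap is that the approximation identity itself --- specifically the lower bound $\liminf_N |F_N|^{-1}\dim_{\mathbb{C}}\ker\big(L_a|_{\ell^2(F_N)}\big) \geq \tau(P)$, the only direction you actually use --- is essentially the entire content of the proposition, and you do not prove it. Your sketch (compress $L_a^*L_a$ and identify the kernel of the compression with $\ker(L_a|_{\ell^2(F_N)})$) only sets up the statement; you then concede this bound is ``the main obstacle'' and defer it to quantitative Fuglede--Kadison determinant input that you never supply. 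Nor can the twisted identity be quoted off the shelf: \cite{elek2003analytic} treats only the untwisted group algebra, and the twisted extension is precisely what must be established here, so the proposal assumes what is to be shown.

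Moreover, your assertion that the determinant input ``cannot be bypassed'' is incorrect in the amenable setting, and seeing why is the missing idea. Write $K = \supp(a)$ and restrict $L_a$ to $V_n' = \{c : \supp(c) \subseteq \interior_K(F_n)\}$, where $\interior_K(F_n) = \{\gamma \in F_n : K\gamma \subseteq F_n\}$; this restriction maps into $V_n = \{c : \supp(c) \subseteq F_n\}$. Unlike a compression, a \emph{restriction} has its kernel and its image literally contained in $\ker L_a$ and in $L_a(\ell^2(\Gamma))$, whose closures are $\rho_\Gamma^\sigma$-invariant. Writing $P_{\mathcal{N}}$ and $P_{\mathcal{R}}$ for the projections onto $\ker L_a$ and $\overline{L_a(\ell^2(\Gamma))}$, the invariance argument ($\|Q\delta_\gamma\| = \|Q\delta_e\|$ for the projection $Q$ onto any $\rho_\Gamma^\sigma$-invariant subspace) yields both $|F_n|^{-1}\dim \ker(L_a|_{V_n'}) \le \|P_{\mathcal{N}}\delta_e\|^2$ and $|F_n|^{-1}\dim L_a(V_n') \le \|P_{\mathcal{R}}\delta_e\|^2$, where the dimensions are computed as traces over the basis $\{\delta_\gamma\}_{\gamma \in F_n}$ because both subspaces sit inside $V_n$. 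Now finite-dimensional rank--nullity gives $\dim\ker(L_a|_{V_n'}) + \dim L_a(V_n') = |\interior_K(F_n)|$, the F\o lner property gives $|\interior_K(F_n)|/|F_n| \to 1$, and $\|P_{\mathcal{N}}\delta_e\|^2 + \|P_{\mathcal{R}}\delta_e\|^2 = 1$ (continuous rank--nullity, via the polar decomposition in the finite von Neumann algebra). These three facts force both upper bounds to be attained in the limit; in particular $\lim_n |F_n|^{-1}\dim\ker(L_a|_{V_n'}) = \|P_{\mathcal{N}}\delta_e\|^2 > 0$, which is exactly the lower bound you needed --- no spectral measures, no determinants. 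Your worry about truncation producing only approximate null vectors is a real obstacle for the compression/truncation route, but the restriction-to-the-$K$-interior trick sidesteps it entirely; this elementary squeeze is Elek's argument and is the proof the paper gives in the twisted setting.
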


\begin{proof}
Since $\lambda_\Gamma^\sigma(\Gamma) c$ is assumed to be linearly dependent, there exists nonzero $a \in \C \Gamma$ such that 
$ \sum_{\gamma \in \Gamma} a(\gamma) \lambda_{\Gamma}^{\sigma} (\gamma) c = 0$. Throughout the proof, we fix such a sequence $a$ and define the operator $C_a : \ell^2 (\Gamma) \to \ell^2 (\Gamma)$ by
\[
C_a= \sum_{\gamma \in \Gamma} a(\gamma) \lambda_{\Gamma}^{\sigma} (\gamma).
\]
Then for showing the claim it suffices to prove there exists nonzero $c' \in \FS$ such that $C_a c' = 0$. Note that $C_a$ commutes with $\rho_\Gamma^{\sigma}$, so that the kernel $\mathcal{N}(C_a)$ and range $\mathcal{R}(C_a)$ of $C_a$ are invariant under $\rho_\Gamma^{\sigma}$.

Set $K:= \supp(a)$ and let $(F_n)_{n \in \mathbb{N}}$ be a F\o lner sequence in $\Gamma$.
For fixed $n \in \mathbb{N}$, set 
$\interior_K (F_n) := \{ \gamma \in F_n : K \gamma \subseteq F_n \}$, and define the subspaces 
\[
V_n = \{ c \in \ell^2 (\Gamma) : \supp (c) \subseteq F_n \} \quad \text{and} \quad V'_n = \{ c \in \ell^2 (\Gamma) : \supp(c) \subseteq \interior_K (F_n) \}. 
\]
Note that $\supp(C_a c) \subseteq \supp(a)\supp(c)$ for all $c \in \ell^2(\Gamma)$. Since $K \interior_K(F_n) \subseteq F_n$, this implies that $C_a(V_n') \subseteq V_n$. Hence we consider the restriction of $C_a$ to $V'_n$ as a map $C^n_a := C_a|_{V_n'} : V_n' \to V_n$. Let $\mathcal{N}(C^n_a) \subseteq V'_n$ denote the kernel of $C_a^n$.

The proof will be split into two steps.
\\~\\
\textbf{Step 1.} In this step we show that the orthogonal projections $P_{\mathcal{N}(C_a)}$ and $P_{\mathcal{N}(C^n_a)}$ onto $\mathcal{N}(C_a)$  and $\mathcal{N}(C_a^n)$, respectively, satisfy the identity
\begin{align} \label{eq:dim_lim}
\tau(P_{\mathcal{N}(C_a)}) =  \lim_{n \to \infty} \frac{1}{|F_n|} \sum_{\gamma \in F_n} \| P_{\mathcal{N}(C^n_a)} \delta_{\gamma} \|^2.
\end{align}
For this, note that, given $n \in \mathbb{N}$, the kernel $\mathcal{N}(C_a^n) \subseteq V_n \cap \mathcal{N}(C_a) \subseteq \mathcal{N}(C_a)$, so $\| P_{\mathcal{N}(C_a^n)} \delta_\gamma \| \leq \| P_{\mathcal{N}(C_a)} \delta_\gamma \|$ for all $\gamma \in \Gamma$. In addition, since $\mathcal{N}(C_a)$ is invariant under $\rho_\Gamma^{\sigma}$, it follows that
\[ \| P_{\mathcal{N}(C_a)} \delta_\gamma \| = \| P_{\mathcal{N}(C_a)} \rho_{\Gamma}^\sigma(\gamma^{-1})\delta_e \| = \| \rho_{\Gamma}^\sigma(\gamma^{-1})P_{\mathcal{N}(C_a)} \delta_e \| = \| P_{\mathcal{N}(C_a)} \delta_e \| , \quad \gamma \in \Gamma. \]
Combining both observations yields
 \begin{align} \label{eq:dim_upper}
 \frac{1}{|F_n|} \sum_{\gamma \in F_n} \| P_{\mathcal{N}(C_a^n)} \delta_{\gamma} \|^2 \leq 
 \frac{1}{|F_n|} \sum_{\gamma \in F_n} \| P_{\mathcal{N}(C_a)} \delta_{\gamma} \|^2 = \tau( P_{\mathcal{N}(C_a)} ),
 \end{align}
Similarly, since the range $\mathcal{R}(C_a^n) \subseteq \mathcal{R}(C_a)$ and $\mathcal{R}(C_a)$ is $\rho_\Gamma^{\sigma}$-invariant, it follows that
  \begin{align} \label{eq:dim_upper2}
\frac{1}{|F_n|} \sum_{\gamma \in F_n} \| P_{\mathcal{R }(C_a^n)} \delta_{\gamma}\|^2 \leq \tau ( P_{\mathcal{R}(C_a)} ).
 \end{align}
Since both $\mathcal{N}(C_a^n)$ and $\mathcal{R}(C_a^n)$ are contained in $V_n$, it follows that $\| P_{\mathcal{N}(C_a^n)} \delta_\gamma \| = \| P_{\mathcal{R}(C_a^n)} \delta_\gamma \| = 0$ whenever $\gamma \notin F_n$. Hence
\begin{align*}
\frac{1}{|F_n|} \sum_{\gamma \in F_n} \| P_{\mathcal{N}(C_a^n)} \delta_{\gamma} \|^2 + \frac{1}{|F_n|} \sum_{\gamma \in F_n} \| P_{\mathcal{R }(C_a^n)} \delta_{\gamma} \|^2 &= \frac{\tr(P_{\mathcal{N}(C_a^n)}) + \tr(P_{\mathcal{R }(C_a^n)}) }{|F_n|} \\
&= \frac{\dim (\mathcal{N}(C_a^n)) + \dim (\mathcal{R}(C_a^n))}{|F_n|} \\
&= \frac{|\dim(V_n')|}{|F_n|} \\
&= \frac{|\interior_K (F_n)|}{|F_n|}. \numberthis \label{eq:folner0}
\end{align*}
We claim that 
\begin{align} \label{eq:folner2}
    \lim_{n \to \infty} \frac{|\interior_K (F_n)|}{|F_n|} = 1.
\end{align}
To see this, note that $\interior_K (F_n) = \bigcap_{k \in K \cup \{ e \} } k^{-1}F_n$, so that
\[ F_n \setminus \interior_K (F_n) = \bigcup_{k \in K \cup \{ e \} } F_n \cap k^{-1}F_n^c \subseteq \bigcup_{k \in K \cup \{e \}} F_n \triangle\, k^{-1}F_n .\]
Hence, as $n \to \infty$,
\[ \Big| 1 - \frac{|\interior_K (F_n)|}{|F_n|} \Big| = \frac{|F_n \setminus \interior_K (F_n)|}{|F_n|} \leq \sum_{k \in K \cup \{ e \}} \frac{|F_n \triangle\, k^{-1}F_n|}{|F_n|} \to 0 , \]
which proves the claim \eqref{eq:folner2}.

Combining \Cref{eq:folner0} and \Cref{eq:folner2} gives
\begin{align*}
\lim_{n \to \infty} \frac{1}{|F_n|} \sum_{\gamma \in F_n} \| P_{\mathcal{N}(C_a^n)} \delta_{\gamma} \|^2 + \frac{1}{|F_n|} \sum_{\gamma \in F_n} \| P_{\mathcal{R }(C_a^n)} \delta_{\gamma} \|^2
= 1.
\end{align*}
On the other hand it follows from \Cref{lem:trace} (iv) that
$\tau( P_{\mathcal{N}(C_a)} ) + \tau ( P_{\mathcal{R}(C_a)} ) = 1$. Thus, 
 \[
\lim_{n \to \infty} \frac{1}{|F_n|} \sum_{\gamma \in F_n} \| P_{\mathcal{N}(C_a^n)} \delta_{\gamma}\|^2 + \frac{1}{|F_n|} \sum_{\gamma \in F_n} \| P_{\mathcal{R }(C_a^n)} \delta_{\gamma}\|^2 =\tau ( P_{\mathcal{N}(C_a)} ) + \tau ( P_{\mathcal{R}(C_a)} ).
 \]
 Using \Cref{eq:dim_upper} and \Cref{eq:dim_upper2}, the claim \eqref{eq:dim_lim} follows.
\\~\\
\textbf{Step 2.} Since $C_a$ is nonzero it follows from \Cref{lem:trace} (iii) that $\tau(P_{C_a}) \neq 0$. By \Cref{eq:dim_lim}, there exists $n' \in \mathbb{N}$ such that $|F_{n'}|^{-1}\sum_{\gamma \in F_{n'}} \| P_{\mathcal{N}(C_a^{n'})} \delta_{\gamma} \|^2 > 0$, 
which implies that $\mathcal{N}(C^{n'}_a) \neq \{0\}$. Thus, there exists nonzero $c' \in V_{n'} \cap \mathcal{N}(C_a)$ which satisfies $c' \in \FS$ and $C_a c' = 0$. 
\end{proof}

The above argument is the same one as given in \cite[Theorem]{elek2003analytic}, but extended to twisted left regular representations of $\Gamma$.

\section{Zero divisors in twisted group rings}
This section is devoted to the study of zero divisors for twisted convolution. In particular, it will be shown that such nontrivial zero divisors do not exist for twisted convolutions on $\mathbb{Z}^d$, and more generally the class of locally indicable groups. In combination with the results obtained in \Cref{sec:linear}, this will allow us to provide the proofs of the main theorems in \Cref{sec:proofmain}.

Throughout this section, $\Gamma$ will denote a countable discrete group with identity element $e$ and $\sigma$ will denote a 2-cocycle on $\Gamma$ (cf. \Cref{eq:cocycle}). The \emph{$\sigma$-twisted convolution} of two sequences $a,b \in \FS$ is the sequence $a \ast_{\sigma} b$ in $\FS$ defined by
\begin{align*}
 (a *_\sigma b)(\gamma') 
 = \sum_{\gamma \in \Gamma} \sigma(\gamma,\gamma^{-1}\gamma')a(\gamma)b(\gamma^{-1}\gamma') , \qquad \, \quad \gamma' \in \Gamma.
 \end{align*}
Equipped with twisted convolution $\ast_{\sigma}$, the vector space $\FS$ is a complex algebra which will be denoted by $\mathbb{C} (\Gamma, \sigma)$ to emphasize the dependence on $\sigma$.
The algebra $\mathbb{C} (\Gamma, \sigma)$ is called a \emph{twisted group ring} or \emph{twisted group algebra} of $\Gamma$. As a vector space, $\C(\Gamma,\sigma)$ is spanned by the elements $\{ \delta_{\gamma} : \gamma \in \Gamma \}$, which are easily seen to satisfy the convolution relation
\begin{equation}
    \delta_\gamma *_\sigma \delta_{\gamma'} = \sigma(\gamma,\gamma') \delta_{\gamma\gamma'}, \quad \gamma, \gamma' \in \Gamma.
    \label{eq:delta}
\end{equation}
For $n \in \mathbb{N}$ and $a \in \FS$, the $n$-fold twisted convolution product of $a$ is denoted by $a^{\ast_{\sigma}(n)}$.
\subsection{Zero divisors}

An element $a \in \C(\Gamma,\sigma)$ is called a \emph{zero divisor} if there exists a nonzero $b \in \C (\Gamma,\sigma)$ such that $a *_\sigma b = 0$. The zero sequence is always a zero divisor, the so-called \emph{trivial zero divisor}. We will be concerned with when $\C(\Gamma,\sigma)$ has no zero divisors apart from the trivial one. 

First, recall  that $\Gamma$ is called \emph{torsion-free} if whenever $\gamma^n = e$ for some $\gamma \in \Gamma$ and positive integer $n$, then $\gamma = e$. The following extends a well-known observation for usual (nontwisted) group rings.

\begin{lemma}\label{lem:zero_divisor_torsion_free}
    If the twisted group ring $\C(\Gamma,\sigma)$ has no nontrivial zero divisors, then $\Gamma$ must be torsion-free.
\end{lemma}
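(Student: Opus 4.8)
The plan is to prove the contrapositive: if $\Gamma$ is \emph{not} torsion-free, then $\C(\Gamma,\sigma)$ contains a nontrivial zero divisor. So suppose there is $\gamma \in \Gamma$ with $\gamma \neq e$ of finite order, and let $n \geq 2$ be the order of $\gamma$, so that $\gamma^n = e$ while $\gamma^k \neq e$ for $0 < k < n$.

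The first step is to record two normalization facts. From the cocycle identity \eqref{eq:cocycle} together with $\sigma(e,e) = 1$ one deduces $\sigma(e,x) = \sigma(x,e) = 1$ for all $x \in \Gamma$ (set $x=y=e$, respectively $y=z=e$, and divide by a unimodular factor); hence by \eqref{eq:delta} the element $\delta_e$ is the multiplicative identity of $\C(\Gamma,\sigma)$. Next, iterating \eqref{eq:delta} shows by induction that $\delta_\gamma^{\ast_{\sigma}(k)} = c_k \delta_{\gamma^k}$, where $c_1 = 1$ and $c_{k+1} = c_k\,\sigma(\gamma,\gamma^k)$; in particular $\delta_\gamma^{\ast_{\sigma}(n)} = c_n \delta_e$ for the constant $c_n = \prod_{j=1}^{n-1}\sigma(\gamma,\gamma^j) \in \mathbb{T}$.

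The key idea is to absorb the scalar $c_n$ by rescaling. Since $\sigma$ takes values in $\mathbb{T}$, so does $c_n$, and hence there exists $\omega \in \mathbb{T}$ with $\omega^n = c_n$. Setting $u := \overline{\omega}\,\delta_\gamma \in \C(\Gamma,\sigma)$ gives $u^{\ast_{\sigma}(n)} = \overline{\omega}^{\,n} c_n\,\delta_e = \delta_e$, so $u$ behaves like an honest element of order $n$. Now define $b := \delta_e - u$ and $a := \sum_{k=0}^{n-1} u^{\ast_{\sigma}(k)}$ (with $u^{\ast_{\sigma}(0)} := \delta_e$). A telescoping computation, using only associativity and that $\delta_e$ is the unit, yields $b \ast_{\sigma} a = \delta_e - u^{\ast_{\sigma}(n)} = 0$. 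Both $a$ and $b$ are nonzero: $b = \delta_e - \overline{\omega}\,\delta_\gamma$ is nonzero because $\delta_e$ and $\delta_\gamma$ are linearly independent (as $\gamma \neq e$), and $a = \sum_{k=0}^{n-1}\overline{\omega}^{\,k} c_k\,\delta_{\gamma^k}$ is a nontrivial linear combination of the distinct basis elements $\delta_e, \delta_\gamma, \ldots, \delta_{\gamma^{n-1}}$. Thus $b$ is a nontrivial zero divisor, completing the contrapositive.

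The only genuine subtlety is the twist. In the untwisted case $\delta_\gamma^{\ast(n)}$ equals $\delta_e$ exactly, whereas here it equals $c_n\delta_e$ for a unimodular constant, so the familiar factorization $\delta_e - \delta_\gamma^{\ast(n)} = (\delta_e - \delta_\gamma)\ast(\delta_e + \delta_\gamma + \cdots)$ does not close up on its own. I expect the rescaling step to be the main point to get right: observing that $c_n \in \mathbb{T}$ admits an $n$-th root in $\mathbb{T}$ lets me replace $\delta_\gamma$ by the normalized unit $u$ and reduce to the standard group-ring argument that a torsion element produces a zero divisor. Everything else is routine verification.
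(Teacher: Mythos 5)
Your proof is correct and follows essentially the same route as the paper: argue by contraposition, compute $\delta_\gamma^{\ast_\sigma(n)} = \alpha\,\delta_e$ for a unimodular scalar $\alpha$, rescale $\delta_\gamma$ by an $n$-th root of that scalar to get an element whose $n$-th twisted power is exactly $\delta_e$, factor via the geometric sum, and verify both factors are nonzero using the distinctness of $e, \gamma, \ldots, \gamma^{n-1}$. The only differences are cosmetic: you place the geometric sum on the right rather than the left of the factorization, and you explicitly verify that $\sigma(e,x)=\sigma(x,e)=1$ so that $\delta_e$ is the unit, a normalization the paper uses implicitly.
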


\begin{proof}
    We give a proof by contraposition. Let $\gamma \in \Gamma$ be nontrivial and let $n \in \N$ be the least natural number such that $\gamma^n = e$. Note that by iterating \eqref{eq:delta}, it follows that
    \[ \delta_{\gamma}^{\ast_{\sigma}(n)} = \sigma(\gamma,\gamma) \sigma(\gamma,\gamma^2) \cdots \sigma(\gamma,\gamma^{n-1}) \delta_e , \]
    where $\delta_{\gamma}^{\ast_{\sigma}(n)}$ denotes the $n$-fold twisted convolution product of $\delta_{\gamma}$. Set 
    \[\alpha := \sigma(\gamma,\gamma) \sigma(\gamma,\gamma^2) \cdots \sigma(\gamma,\gamma^{n-1}) \quad \text{and} \quad a := \alpha^{-1/n} \delta_{\gamma}.\]
    Then $a^{\ast_{\sigma} (n)} = \delta_e$, and by expanding brackets one sees that
    \[ (\delta_e + a + a^{\ast_{\sigma} (2)} + \cdots + a^{\ast_{\sigma}(n-1)}) \ast_{\sigma} (a-\delta_e) = a^{\ast_\sigma(n)} - \delta_e = 0 . \]
    Note that for each $1 \leq k \leq n-1$, the support of the power $a^{\ast_{\sigma}(k)}$ equals $\{ \gamma^k \}$, and since each of these elements are distinct this implies that $\delta_e + a + a^{\ast_{\sigma}(2)} + \cdots + a^{\ast_{\sigma}(n-1)}$ is nonzero. This shows that $a-\delta_e$ is a nontrivial zero divisor in $\C(\Gamma,\sigma)$.
\end{proof}

For ordinary (nontwisted) group rings, the converse to \Cref{lem:zero_divisor_torsion_free} is the well-known \emph{zero divisor conjecture}, and is currently an open problem. More generally one can ask the following question.

\begin{question}\label{qu:twisted_zero_divisor}
Let $\Gamma$ be a torsion-free group and let $\sigma$ be a 2-cocycle on $\Gamma$. Does the twisted group ring $\C(\Gamma,\sigma)$ contain no nontrivial zero divisors?
\end{question}

The following example answers \Cref{qu:twisted_zero_divisor} affirmatively for $\Gamma = \Z^d$, $d \in \N$.

\begin{example}\label{ex:integers_zero_divisor}
Let $\Gamma = \Z^d$ and let $\sigma$ be any 2-cocycle on $\Z^d$. We give a direct proof that the twisted group ring $\C(\Z^d,\sigma)$ does not contain any nontrivial zero divisors. Let $e_i$ denote the $i$th basis vector of $\Z^d$, $1 \leq i \leq d$, and set $u_i = \delta_{e_i}$. Note that elements $a \in \C(\Z^d,\sigma)$ can be written as multivariate polynomials
\begin{align*}
    a = \sum_{i_1, \ldots, i_d \in \Z} a_{i_1, \ldots, i_d} u_1^{\ast_{\sigma}(i_1)} \cdots u_d^{\ast_{\sigma}(i_d)}
\end{align*} 
where the coefficients $a_{i_1,\ldots,i_d} \in \C$ are zero for all but finitely many indices. The multiplication is not commutative like in an ordinary polynomial ring, but instead governed by the basic noncommutative relations $u_i u_j = z_{i,j} u_j u_i$ where $z_{i,j} = \sigma(e_i,e_j)\overline{\sigma(e_j,e_i)}$.

Let us call $a$ non-negative if $a_{i_1,\ldots,i_d} = 0$ whenever $i_k < 0$ for some $1 \leq k \leq d$. We define the degree of a non-negative, nonzero element $a$ to be the maximum of the numbers $i_1 + \cdots i_d$ where $a_{i_1,\ldots,i_d} \neq 0$, and set $\deg(0) = -1$. One can then verify as with usual polynomial multiplication that $\deg(a *_\sigma b) = \deg(a) + \deg(b)$ for non-negative elements $a,b \in \C(\Z^d,\sigma)$.

Letting now $a,b \in \C(\Z^d,\sigma)$ be nonzero, so that $\deg(a) \geq 0$ and $\deg(b) \geq 0$, we wish to prove that $a *_\sigma b \neq 0$. Note that by multiplying with a high enough power of $u_1 \cdots u_d$ we may assume that $a$ and $b$ are non-negative. But then $\deg(a *_\sigma b) = \deg(a) + \deg(b) \geq 0$ which implies that $a *_\sigma b \neq 0$.
\end{example}

\subsection{Locally indicable groups}

Our next result (\Cref{prop:zerodivisor_indicable}) 
shows that \Cref{qu:twisted_zero_divisor} is affirmative for locally indicable groups. This extends a classical result \cite{higman1940units} to the twisted setting. We start by introducing the relevant notions and terminology. Readers only interested in the case $\Gamma = \mathbb{Z}^d$ and \Cref{thm:linnell} may skip this subsection.

A \emph{degree map} on $\Gamma$ is a surjective group homomorphism $\phi \colon \Gamma \to \Z$. Given $\gamma \in \Gamma$, we refer to $\phi(\gamma)$ as the \emph{degree} of $\gamma$ (relative to $\phi$). An element $a \in \C(\Gamma,\sigma)$ is called \emph{homogeneous of degree $k$} if $\phi(\gamma) = k$ for all $\gamma \in \supp(a)$.
Note that if $a$ is homogeneous of degree $k$ and $b$ is homogeneous of degree $l$, then $a *_\sigma b$ is homogeneous of degree $k+l$: Indeed, if $\gamma \in \supp(a *_\sigma b)$ then $\gamma = \gamma_1 \gamma_2$ for some $\gamma_1 \in \supp(a)$ and $\gamma_2 \in \supp(b)$, which means that $\phi(\gamma) = \phi(\gamma_1)+\phi(\gamma_2) = k+l$. Every element of $\C(\Gamma,\sigma)$ can be written as a sum of homogeneous elements. If $a$ is a sum of homogeneous elements $a_1, \ldots, a_m$ of degrees $k_1, \ldots, k_m$ and $b$ is a sum of homogeneous elements $b_1, \ldots, b_n$ of degrees $l_1, \ldots, l_n$, then $a *_\sigma b$ is the sum of the homogeneous elements $a_i *_\sigma b_j$ of degrees $k_i + l_j$, $1 \leq i \leq m$ and $1 \leq j \leq n$.

The group $\Gamma$ is said to be \emph{locally indicable} if every nontrivial, finitely generated subgroup of $\Gamma$ admits a degree map.

\begin{proposition} \label{prop:zerodivisor_indicable}
If $\Gamma$ is locally indicable, then $\C(\Gamma,\sigma)$ contains no nontrivial zero divisors.
\end{proposition}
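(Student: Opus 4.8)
The plan is to argue by contradiction through a minimal counterexample, using a degree map to peel off a top homogeneous component whose support is strictly smaller. Suppose $\C(\Gamma,\sigma)$ had a nontrivial zero divisor, and among all pairs of nonzero $a,b \in \C(\Gamma,\sigma)$ with $a \ast_\sigma b = 0$ choose one minimizing $|\supp(a)| + |\supp(b)|$. Since left and right multiplication by the invertible elements $\delta_\gamma$ preserves both the relation $a \ast_\sigma b = 0$ and the cardinalities of the supports, I would first normalize: replacing $a$ by $\delta_{\gamma_1^{-1}} \ast_\sigma a$ and $b$ by $b \ast_\sigma \delta_{\gamma_2^{-1}}$ for chosen $\gamma_1 \in \supp(a)$ and $\gamma_2 \in \supp(b)$, I may assume $e \in \supp(a)$ and $e \in \supp(b)$. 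If either support were a singleton, the corresponding element would be a unit, forcing the other to vanish; hence both supports have at least two elements and $H := \langle \supp(a) \cup \supp(b) \rangle$ is a nontrivial, finitely generated subgroup of $\Gamma$.

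Because $\Gamma$ is locally indicable and $H$ is nontrivial and finitely generated, $H$ carries a degree map $\phi \colon H \to \Z$. The heart of the argument is the observation that $a$ and $b$ cannot both be $\phi$-homogeneous: if they were, then since $e \in \supp(a)$ and $e \in \supp(b)$ both supports would lie in $\ker\phi$, whence $H = \langle \supp(a) \cup \supp(b)\rangle \subseteq \ker\phi$, contradicting surjectivity of $\phi$. So at least one of them, say $a$, is inhomogeneous. Writing $a = \sum_k a_k$ and $b = \sum_l b_l$ as sums of homogeneous components and letting $a_{\mathrm{top}}, b_{\mathrm{top}}$ denote the components of top degree, the degree $\deg(a)+\deg(b)$ part of $a \ast_\sigma b$ equals $a_{\mathrm{top}} \ast_\sigma b_{\mathrm{top}}$, as this is the unique way to attain the maximal degree. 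Since $a \ast_\sigma b = 0$, this forces $a_{\mathrm{top}} \ast_\sigma b_{\mathrm{top}} = 0$ with both factors nonzero. But inhomogeneity of $a$ gives $|\supp(a_{\mathrm{top}})| < |\supp(a)|$, while $|\supp(b_{\mathrm{top}})| \le |\supp(b)|$, producing a nonzero zero-divisor pair of strictly smaller total support, contradicting minimality.

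I expect the only genuinely delicate point to be the homogeneous case: a priori, passing to top components need not reduce the supports, and one could worry about an infinite descent through kernels of successive degree maps (for instance when the supports differ by elements of the commutator subgroup, as for the discrete Heisenberg group). The normalization forcing $e$ into both supports is precisely what rules this out, turning ``not both homogeneous'' into the elementary statement that a surjection onto $\Z$ cannot annihilate a generating set. The remaining ingredients are routine: that each $\delta_\gamma$ is invertible with $\delta_\gamma \ast_\sigma \delta_{\gamma^{-1}} = \sigma(\gamma,\gamma^{-1})\delta_e$ and $\delta_e$ the two-sided identity (which follows from the cocycle normalization $\sigma(e,\cdot) = \sigma(\cdot,e) = 1$), and that the twisted product of homogeneous elements of degrees $k$ and $l$ is homogeneous of degree $k+l$, as already recorded before the statement. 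Finally, it suffices to work inside $H$ throughout, since subgroups of locally indicable groups are locally indicable and the twisted convolution of sequences supported in $H$ remains supported in $H$.
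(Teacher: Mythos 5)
Your proof is correct and is essentially the paper's argument: the same normalization forcing $e$ into both supports, the same key observation that $a$ and $b$ cannot both be homogeneous relative to a degree map on the subgroup generated by the supports, and the same descent on $|\supp(a)|+|\supp(b)|$ (phrased as a minimal counterexample rather than induction). The only cosmetic differences are that you extract the top-degree homogeneous components where the paper uses the least-degree ones, and you dispose of the singleton case via invertibility of $\delta_\gamma$ rather than a direct base-case computation.
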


\begin{proof}
We will prove the following statement by induction on $n$: For every nonzero $a,b \in \C(\Gamma,\sigma)$ with $|\supp(a)|+|\supp(b)| = n$, the twisted convolution $a *_\sigma b$ is nonzero. For the base case $n=2$ we have that $|\supp(a)|=|\supp(b)| = 1$, say $\supp(a) = \{ \gamma_1 \}$ and $\supp(b) = \{ \gamma_2 \}$, so that $a *_\sigma b = a(\gamma_1)b(\gamma_2)\sigma(\gamma_1,\gamma_2) \delta_{\gamma_1 \gamma_2}$. Then $(a *_\sigma b)(\gamma_1 \gamma_2) = a(\gamma_1) b(\gamma_2) \sigma(\gamma_1,\gamma_2) \neq 0$, which means that $\gamma_1 \gamma_2 \in \supp(a *_\sigma b)$. Hence, the base case is proved.

For the induction step, let $n \in \N$, $n \geq 3$, and assume that the statement holds for all $k < n$. Let $a,b \in \C(\Gamma,\sigma)$ be nonzero with $|\supp(a)|+|\supp(b)|=n$. For any $\gamma_1 \in \supp(a)$ and $\gamma_2 \in \supp(b)$, consider $a' = \delta_{{\gamma_1}^{-1}} *_\sigma a$ and $b' = b *_\sigma \delta_{{\gamma_2}^{-1}}$. Note that $a *_\sigma b = 0$ if and only if $a' *_\sigma b' = 0$ and that $e \in \supp(a') \cap \supp(b')$. Hence, by replacing $a$ with $a'$ and $b$ with $b'$ respectively, it may be assumed that $e$ is contained in the support of both $a$ and $b$.

Let $\Gamma_0$ be the subgroup of $\Gamma$ generated by $\supp(a) \cup \supp(b)$. Since $n \geq 3$, $\Gamma_0$ is nontrivial, so there exists a degree map $\phi \colon \Gamma_0 \to \Z$. We claim that $a$ and $b$ cannot both be homogeneous (relative to $\phi$). Indeed, if both $a$ and $b$ where homogeneous, then they would be homogeneous of degree $0$ since $e \in \supp(a) \cap \supp(b)$, which would imply that $\phi(\gamma) = 0$ for all $\gamma \in \supp(a) \cup \supp(b)$
, hence $\phi \equiv 0$. This contradicts the surjectivity of $\phi$.

Let $k = \min \{ \phi(\gamma) : \gamma \in \supp(a) \}$ and $l = \min \{ \phi(\gamma) : \gamma \in \supp(b) \}$. Let
\[ a' = \sum_{\gamma \in \phi^{-1}(k)} a(\gamma) \delta_\gamma \quad \text{and} \quad b' = \sum_{\gamma \in \phi^{-1}(l)} b(\gamma) \delta_{\gamma}.
\]
In other words, $a'$ is the homogeneous element of least degree in the expansion of $a$ into homogeneous elements, and analogously for $b'$ with respect to $b$. By the previous paragraph either $\supp(a') \subsetneq \supp(a)$ or $\supp(b') \subsetneq \supp(b)$. In either case $|\supp(a')|+|\supp(b')| < n$, so by the induction hypothesis $a' *_\sigma b' \neq 0$. Expanding $a *_\sigma b$ into a sum of homogeneous elements, we obtain
\[ a *_\sigma b = a' *_\sigma b' + R,  \]
where $a' *_\sigma b'$ is homogeneous of degree $k+l$ and $R$ is a sum of homogeneous elements of degrees strictly bigger than $k+l$. Consequently, the supports of $a' *_\sigma b'$ and $R$ are disjoint, so $a' *_\sigma b' \neq 0$ implies that $a *_\sigma b \neq 0$. This finishes the proof.
\end{proof}

The proof of \Cref{prop:zerodivisor_indicable} follows the proof for ordinary (nontwisted) group rings in \cite{higman1940units}.

Lastly, we provide a simple argument showing that nilpotent groups are locally indicable. Recall that $\Gamma$ is nilpotent if 
the upper central series defined recursively by $Z_0 := \{e \}$ and
\[ Z_{n+1} := \{ \gamma \in \Gamma : [\gamma,\gamma'] \in Z_n \; \text{for all $\gamma' \in \Gamma$} \}, \quad n \in \N , \]
terminates after a finite number of steps, that is, $Z_n = \Gamma$ for some $n \in \N$. The smallest such $n$ is called the nilpotency class of $\Gamma$. Note that $Z_1 = Z(\Gamma)$, the center of $\Gamma$.

\begin{lemma} \label{lem:nilpotent_indicable}
Every torsion-free nilpotent group $\Gamma$ is locally indicable.
\end{lemma}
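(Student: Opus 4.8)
The plan is to prove that every torsion-free nilpotent group $\Gamma$ is locally indicable by induction on the nilpotency class. Since local indicability is a condition on finitely generated subgroups, and any finitely generated subgroup of a torsion-free nilpotent group is again torsion-free nilpotent (of nilpotency class no larger than that of $\Gamma$), it suffices to produce a degree map $\phi \colon H \to \Z$ for every nontrivial, finitely generated, torsion-free nilpotent group $H$. The key structural fact I would exploit is that a nontrivial nilpotent group has nontrivial center $Z(H) = Z_1$, and that the quotient $H/Z(H)$ is nilpotent of strictly smaller class. This sets up the induction.

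For the base case, suppose $H$ has nilpotency class $1$, i.e., $H$ is abelian. A nontrivial finitely generated torsion-free abelian group is isomorphic to $\Z^r$ for some $r \geq 1$, which manifestly admits a surjection onto $\Z$ (projection onto any coordinate). For the inductive step, let $H$ be nontrivial, finitely generated, torsion-free, and nilpotent of class $c \geq 2$. First I would consider the quotient $H / Z(H)$, which is finitely generated and nilpotent of class $c-1$. If this quotient is nontrivial, I would like to apply the induction hypothesis to obtain a degree map on $H/Z(H)$ and then compose with the quotient map $H \to H/Z(H)$ to get a degree map on $H$.

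The main obstacle, and the step requiring the most care, is that the induction hypothesis is stated for \emph{torsion-free} groups, whereas $H/Z(H)$ need not be torsion-free in general. The clean way around this is to route the argument through the abelianization instead. I would pass to $H^{\mathrm{ab}} = H/[H,H]$, which is a finitely generated abelian group, hence isomorphic to $\Z^r \oplus T$ with $T$ finite. If the free rank $r$ is at least $1$, then composing the abelianization map with a projection onto a $\Z$-summand yields the desired degree map. The crux is therefore to show that $r \geq 1$, equivalently that $H^{\mathrm{ab}}$ is infinite, equivalently that $[H,H] \neq H$. This follows because a nontrivial nilpotent group cannot be perfect: if $[H,H] = H$ then iterating the lower central series would give $\gamma_k(H) = H$ for all $k$, contradicting $\gamma_{c+1}(H) = \{e\}$ with $H$ nontrivial. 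Thus $[H,H] \subsetneq H$ and $H^{\mathrm{ab}}$ is a nontrivial finitely generated abelian group; I would then use torsion-freeness of $H$ only indirectly, noting that it suffices that $H^{\mathrm{ab}}$ have positive free rank, which is guaranteed as soon as $H$ is infinite. Since $H$ is torsion-free and nontrivial, it is infinite, and a finite abelianization would force (via nilpotency and a central-series argument) $H$ itself to be finite, a contradiction.

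In summary, the argument reduces to two clean facts: that it suffices to treat finitely generated subgroups, and that a nontrivial finitely generated torsion-free nilpotent group has infinite abelianization, whence a surjection onto $\Z$. The nilpotency is used precisely to guarantee the proper containment $[H,H] \subsetneq H$ and to control finiteness via the central series. I expect the delicate point to be articulating why the abelianization has positive free rank without inadvertently assuming the very torsion-freeness of a quotient that may fail; framing everything in terms of the abelianization of $H$ itself, rather than iterated central quotients, sidesteps this difficulty entirely.
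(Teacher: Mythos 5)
Your proposal is correct, but it takes a genuinely different route from the paper. The paper also inducts on nilpotency class, but its inductive step goes through the center: it cites the fact that $\Gamma/Z(\Gamma)$ is again torsion-free for torsion-free nilpotent $\Gamma$ (the very point you flagged as an obstacle --- this is true, though not obvious), and then invokes Higman's lemma that an extension of a locally indicable group by a locally indicable group is locally indicable. Your argument sidesteps both of these external ingredients by working with the abelianization of a finitely generated subgroup $H$ directly: since $H$ is nontrivial and torsion-free it is infinite, and a finitely generated nilpotent group with finite abelianization is finite, so $H^{\mathrm{ab}}$ has positive free rank and the composite $H \to H^{\mathrm{ab}} \to \Z$ is the desired degree map. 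The one step you leave as a sketch --- that finite $H^{\mathrm{ab}}$ forces $H$ finite --- is a standard fact and does hold: the commutator map induces surjections $H^{\mathrm{ab}} \otimes_{\Z} \bigl(\gamma_i(H)/\gamma_{i+1}(H)\bigr) \to \gamma_{i+1}(H)/\gamma_{i+2}(H)$ on lower central quotients, so finiteness of $H^{\mathrm{ab}}$ propagates down the series and $H$ is a finite iterated extension of finite groups; spelling this out would make your proof fully self-contained. On balance, your route is more elementary and avoids two citations (and in fact shows the stronger statement that every infinite finitely generated nilpotent group surjects onto $\Z$, torsion-freeness entering only to guarantee infiniteness), while the paper's route is shorter on the page because it delegates the structural work to the two quoted results, one of which (Higman's paper) it already uses elsewhere.
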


\begin{proof}
We prove the lemma by induction on the nilpotency class of $\Gamma$. If $\Gamma$ has nilpotency class $1$, then it is abelian and torsion-free, hence every finitely generated subgroup of $\Gamma$ is free abelian by the classification of finitely generated abelian groups. For a free abelian group, the projection onto the subgroup generated by any one of its generators is a degree map. This shows that $\Gamma$ is locally indicable when it has nilpotency class $1$.

Next, suppose the result holds for all nilpotent, torsion-free groups of class strictly smaller than $n$ and let $\Gamma$ be torsion-free and nilpotent of class $n$. Firstly, note that the center $Z(\Gamma)$ of $\Gamma$ is abelian and torsion-free, hence locally indicable by the base case of the induction. Secondly, the quotient $\Gamma/Z(\Gamma)$ is nilpotent of class strictly smaller than $n$, and also necessarily torsion-free (see, e.g.,\ \cite[Corollary 2.22]{Clement2017}). Thus $\Gamma/Z(\Gamma)$ is locally indicable by the induction hypothesis. Now that both $Z(\Gamma)$ and $\Gamma/Z(\Gamma)$ are locally indicable,  it follows from \cite[p.\ 246, Lemma]{higman1940units} that $\Gamma$ is locally indicable. This finishes the proof.
\end{proof}

\section{Proof of main theorems} \label{sec:proofmain}

The following general theorem shows that the problem of linearly independent orbits of square-integrable representations over discrete subgroups 
can be reduced to the existence of zero divisors in twisted group rings.

 \begin{theorem} \label{thm:intro3}
    Let $G$ be a second-countable, locally compact group  and let $(\pi, \Hpi)$ be a $\sigma$-projective unitary representation of $G$ admitting admissible vectors. If $\Gamma$ is a discrete amenable subgroup of $G$ such that the twisted group ring $\C (\Gamma,\sigma)$ contains no nontrivial zero divisors, then the coherent system
    \[ \pi(\Gamma)g = \{ \pi(\gamma)g : \gamma \in \Gamma \} \]
    is linearly independent for any nonzero vector $g \in \Hpi$.
\end{theorem}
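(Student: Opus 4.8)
The plan is to argue by contraposition, chaining together the three reduction results of \Cref{sec:linear} and then identifying the resulting linear dependence in $\FS$ with a nontrivial zero divisor for twisted convolution. Concretely, I would assume that $\pi(\Gamma)g$ is linearly dependent for some nonzero $g \in \Hpi$, and from this deduce the existence of a nontrivial zero divisor in $\C(\Gamma,\sigma)$, contradicting the hypothesis.

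First I would apply \Cref{lem:lineardependG} to pass from the linear dependence of $\pi(\Gamma)g$ in $\Hpi$ to the existence of a nonzero $F \in L^2(G)$ whose twisted orbit $\lambda_G^{\sigma}(\Gamma)F$ is linearly dependent; this uses only the existence of an admissible vector and the covariance relation \eqref{eq:covariance}. Next, since $\Gamma$ is a discrete subgroup, \Cref{lem:reductionGamma} yields a nonzero $c \in \ell^2(\Gamma)$ with $\lambda_{\Gamma}^{\sigma}(\Gamma)c$ linearly dependent. Finally, invoking the amenability of $\Gamma$, \Cref{prop:l2zerodivisor} produces a nonzero finitely supported $c' \in \FS$ whose orbit $\lambda_{\Gamma}^{\sigma}(\Gamma)c'$ remains linearly dependent. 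This is the only step in the chain where amenability is needed.

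It then remains to translate this last statement into the language of zero divisors. Linear dependence of $\lambda_{\Gamma}^{\sigma}(\Gamma)c'$ means there exist distinct $\gamma_1, \dots, \gamma_n \in \Gamma$ and scalars $\alpha_1, \dots, \alpha_n \in \C$, not all zero, with $\sum_{k=1}^n \alpha_k \lambda_{\Gamma}^{\sigma}(\gamma_k)c' = 0$. Setting $a := \sum_{k=1}^n \alpha_k \delta_{\gamma_k}$, a direct computation comparing the definition of $\lambda_{\Gamma}^{\sigma}$ with that of twisted convolution shows that $a \ast_{\sigma} c' = \sum_{k=1}^n \alpha_k \lambda_{\Gamma}^{\sigma}(\gamma_k)c'$, whence $a \ast_{\sigma} c' = 0$. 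Since the $\gamma_k$ are distinct, the $\delta_{\gamma_k}$ are linearly independent in $\FS$, so $a$ is nonzero; as $c'$ is nonzero as well, $a$ is a nontrivial zero divisor in $\C(\Gamma,\sigma)$. This contradicts the hypothesis and completes the contrapositive.

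I do not expect a serious obstacle here, since the analytic content is carried entirely by the three preceding results. The one point requiring care is the final identification: one must verify that applying $\lambda_{\Gamma}^{\sigma}$ to a finitely supported sequence is exactly left twisted convolution by $a$, so that the algebraic relation $a \ast_{\sigma} c' = 0$ is genuinely equivalent to the linear dependence of the orbit $\lambda_{\Gamma}^{\sigma}(\Gamma)c'$. This is the conceptual bridge where the analysis of \Cref{sec:linear} meets the algebra of twisted group rings.
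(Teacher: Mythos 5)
Your proposal is correct and takes essentially the same route as the paper: contraposition chained through \Cref{lem:lineardependG}, \Cref{lem:reductionGamma}, and \Cref{prop:l2zerodivisor}. The only difference is that you make explicit the final identification $a \ast_\sigma c' = \sum_{k=1}^n \alpha_k \lambda_\Gamma^\sigma(\gamma_k) c'$ turning the linear dependence of the finitely supported orbit into a nontrivial zero divisor, a step the paper leaves implicit (it is effectively contained in the proof of \Cref{prop:l2zerodivisor}, where the annihilating operator $C_a$ is precisely left twisted convolution by $a$).
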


\begin{proof}
Arguing by contraposition, suppose that the coherent system $\pi(\Gamma)g$ is linearly dependent. By \Cref{lem:lineardependG}, this implies that there exists nonzero $F \in L^2(G)$ such that $\lambda_G^\sigma(\Gamma)F$ is linearly dependent in $L^2(G)$. By \Cref{lem:reductionGamma} this implies again that there exists a nonzero $c \in \ell^2(\Gamma)$ such that $\lambda_\Gamma^\sigma(\Gamma)c$ is linearly dependent in $\ell^2(\Gamma)$. Finally, since $\Gamma$ is assumed amenable, we can apply \Cref{prop:l2zerodivisor} to conclude that $\C(\Gamma,\sigma)$ contains a nontrivial zero divisor.
\end{proof}

\Cref{thm:linnell} and \Cref{thm:intro2} are now a simple consequence of \Cref{thm:intro3}:

\begin{proof}[Proof of \Cref{thm:linnell}]
Let $\Gamma$ be a discrete subgroup of $\R^{2d}$. By the orthogonality relations of the short-time Fourier transform \cite[Chapter 3]{grochenig2001foundations}, any unit vector $g \in L^2 (\mathbb{R}^d)$ is admissible. Hence, by \Cref{thm:intro3}, it suffices to show that $\C(\Gamma,\sigma)$, where $\sigma$ denotes the 2-cocycle from Equation \eqref{eq:time-freq-cocycle}, contains no nontrivial zero divisors. However, a discrete subgroup of $\R^{2d}$ is isomorphic to $\Z^k$ for for some $0 \leq k \leq 2d$, hence the twisted group ring $\C(\Gamma,\sigma)$ is isomorphic to $\C(\Z^k,\sigma')$ for a 2-cocycle $\sigma'$ on $\Z^k$. The nonexistence of nontrivial zero divisors in the latter was established in \Cref{ex:integers_zero_divisor}.
\end{proof}

\begin{proof}[Proof of \Cref{thm:intro2}]
Since $G$ is a unimodular group, it follows by the orthogonality relations \cite{duflo1976on, carey1976square} that any nonzero vector $g \in \Hpi$ is (a multiple of) an admissible vector.
A discrete subgroup $\Gamma$ of a connected, simply connected nilpotent Lie group $G$ is  torsion-free and nilpotent, cf. \cite[Chapter 2]{raghunathan1972discrete}. Thus, $\Gamma$ is locally indicable by Lemma \ref{lem:nilpotent_indicable}. The claim follows therefore directly from \Cref{thm:intro3}.
\end{proof}

\section*{Acknowledgements}
U.E.\ gratefully acknowledges support from the The Research Council of Norway through project 314048. For J.~v.~V., this research was funded in whole or in part by the Austrian
Science Fund (FWF): 10.55776/J4555 and 10.55776/PAT2545623. For open access purposes, the author has applied a CC BY public copyright license to any author-accepted manuscript version arising from this submission.

\bibliographystyle{abbrv}
\bibliography{bib}

\end{document}